\documentclass[10pt]{amsart}
\usepackage[margin=1in]{geometry}
\input{htpy-unitary.sty}
\makeatletter
\renewcommand{\email}[2][]{%
  \ifx\emails\@empty\relax\else{\g@addto@macro\emails{,\space}}\fi%
  \@ifnotempty{#1}{\g@addto@macro\emails{\textrm{(#1)}\space}}%
  \g@addto@macro\emails{#2}%
}
\makeatother

\title[High-corank torsion in homotopy of unitary groups]
{High-corank torsion in homotopy of unitary groups via topological modular forms and higher real $K$-theories}
\author{Hood Chatham}
\author{Yang Hu}
\author{Morgan Opie}
\address[H. Chatham]{Cloudflare}
\email{roberthoodchatham@gmail.com}
\address[Y. Hu]{University of Regina}
\email{yang.hu@regina.ca}
\address[M. Opie]{Northwestern University}
\email{mpopie@northwestern.edu}

\date{}              

\begin{document}

\begin{abstract}
Work of Toda identifies groups of metastable vector bundles on even-dimensional spheres with stable homotopy groups of certain stunted projective spectra. Using Weiss' unitary calculus, the second-named author generalized this identification to show that metastable, stably trivial vector bundles on even cell complexes can naturally be identified with stable homotopy classes of maps into a shifted stunted projective spectrum. Thus, certain classical questions about vector bundles (or homotopy of unitary groups) can be rephrased as stable computations. In this note, we show that certain generalized cohomology theories arising in chromatic and equivariant homotopy theory can be used to deduce the existence of non-trivial, stably trivial vector bundles on spheres and complex projective spaces.
\end{abstract}
\maketitle
\tableofcontents


\section{Introduction}

The homotopy groups of unitary groups are a classical subject that saw great progress from the late 1950s 
through the early 1980s, with notable contributions by Bott, Borel, Hirzebruch, Imanashi, Kervaire, Matsunaga, Mimura, Mosher, Nishida, Toda, and others.
The most famous of these results is Bott's theorem \cite[Equation (1.5)]{Bott}, stating that for $r>n\geq 0$,
\[\pi_{2n+1}U(r) \cong \Z, \,\, \pi_{2n}U(r)=0.\]
This isomorphism arises from Bott's periodicity theorem for the homotopy of the stable unitary group $U$; if $k\geq 2r$, the group $\pi_{k}U(r)$ is unstable, meaning that it cannot be inferred from those of $U$. For $n\geq 2$, Borel and Hirzebruch \cite[26.10]{BorelHirz} computed the first unstable group:
\[ \pi_{2r}U(r) \cong \Z/(r!).\]
The next group $\pi_{2r+1}U(r)$ is either $0$ if $r$ is odd, or $\Z/2$ if $r$ is even \cite[Lemma I.4]{kervaire1960}. Continuing, there are many formulas for groups of the form $\pi_{2r+j}U(r)$ for $j$ small; we refer the reader to \cite{Mimura_HBAT} for an extensive summary, and to the introduction of \cite{Oshima80} for a brief literature review.  Given a fixed odd $j\leq 5$ and $r$ large enough, one observes from \cite[page 971]{Mimura_HBAT} that the groups $\pi_{2r+j}U(r)$ depend only on $r \pmod{24}$. It turns out that there is a good reason for this, and, indeed, there is an explanation for certain patterns in the unstable homotopy groups of unitary groups in the entire {\em metastable range}.
\begin{defn}\label{def:metastable}
A homotopy group $\pi_{2r+j}U(r)$ is metastable if $0\leq j\leq 2r-1$. More generally, if $X$ is a finite cell complex with cells in dimensions at most $k=2r+j$, we say that complex rank $r$ topological bundles on $X$ are metastable if $1\leq j \leq 2r$. \end{defn}
\begin{rmk} Informally, the metastable range consists of non-stable bundles (rank less than dimension) that have real rank at least half the dimension of the cell complex.  Note that complex topological rank $r$ vector bundles correspond to homotopy classes of maps to the classifying space $BU(r)$, and hence the two versions of \Cref{def:metastable} coincide for vector bundles on spheres. \end{rmk}

 For odd $j$ in the metastable range, the groups $\pi_{2r+j}U(r)$ are closely related to {\em stable} homotopy groups of stunted projective spectra  \cite[Theorem 4.3]{Toda59}. Classically, this observation comes from a direct analysis of the cellular structure of $SU(r)$. For $j$ odd and metastable, the identification is particularly simple: 
\begin{equation}\label[empty]{eq:toda}\pi_{2r+j}U(r) \cong \pi_{2r+j}^{\op{s}} \cp{r}{n},\end{equation}
where $n=\frac{2r+j+1}{2}$, $\cp{r}{n}$ is the cofiber of the cellular inclusion $\cp{}{r-1}\to \cp{}{n}$, and $\pi_*^{\op{s}}$ denotes stable homotopy. When $j$ is even and in the metastable range, the left-hand side of \Cref{eq:toda} is replaced by a quotient of the stable homotopy of a stunted projective spectrum by a particular cyclic subgroup. In particular, this explains the fact that odd homotopy groups of unitary groups $\pi_{2r+j}U(r)$ have formulas in terms of $r$ modulo finitely many primes: for $n-r$ fixed, the stable cell structure of $\cp{r}{n}$ depends only on $r$ modulo the James number of $\cp{}{n-r}$ (see \Cref{rmk:James} below for discussion).

\begin{rmk} Recall that $\cp{r}{n}$ admits a stable, $p$-complete Adams decomposition as a direct sum of $p-1$ spectra; the cells in the $i$-th summand are precisely those in degrees congruent to $2i \pmod{2p-2}$. This structure matches up with the $p$-complete unstable splitting of $SU(r)$ provided by 
Mimura--Nishida--Toda \cite[Section 3]{mimura77}. \end{rmk}

It is not an exaggeration to say that the spectra $\cp{r}{n}$ are some of the best-understood objects in stable homotopy theory. Indeed, the stable cell structure of each $\cp{r}{n}$ is determined by the stable cell structure of $\cp{}{\infty}$. 
For any given $n$ and $r$, machines of computational stable homotopy theory provide many methods to compute $\pi_{*}^{\op{s}}\cp{r}{n}$. As such, the stable homotopy groups of $\cp{r}{n}$ for fixed $n-r$ small have been extensively and systematically studied. 

The homotopy groups $\pi_{2r+j}\cp{r}{n}$ for $2n\geq 2r+j$, $r,n$ metastable, 
and $j\leq 9$ are completely known. Implications for unstable homotopy groups of unitary groups are written down in a number of places. The unstable group $\pi_{2r+2}U(r)$ was computed by Toda \cite[Theorem 4.4]{Toda59} and independently by Kervaire \cite[page 161]{kervaire1960}. The groups $\pi_{2r+j}U(r)$ for $j=3,4,5$ are found in \cite[Theorems I, II \& Section 9]{matsunaga61} (with corrections found in \cite{matsunaga62correction}). The $j=6$ case was resolved in \cite[Theorem 1]{matsunaga63}, while the $j=7$ case was resolved by \cite[Theorem 2]{matsunaga63} for $2$-power torsion, in combination with \cite[Theorem 1]{imanishi67} for odd-prime-power torsion. \cite[Theorem 1]{imanishi67} and \cite[Main Theorem]{Matsunaga64} apply much more generally, dealing with odd-primary torsion for a range of $j$. In particular, these cases resolve the odd-primary component of $\pi_{2r+8}U(r)$ and $\pi_{2r+9}U(r)$; the full groups are computed by combining these works with $2$-primary computations from \cite[Theorem 2.1]{mosherstunted} for $j=8$, and both \cite[Theorem 2.1]{mosherstunted} and \cite[Main Theorem]{Oshima80} for $j=9$.

For $j\geq 10$, there are many partial results on $\pi_{2r+j}U(r)$, focusing on $p$-power torsion for a fixed prime that is not too small relative to $j$ (e.g., see again \cite[Theorem 2.1]{mosherstunted} at $p=2$, and \cite[Theorem 1]{imanishi67} and \cite[Main Theorem]{Matsunaga64} at odd primes). However, there is a great deal that is {\em not} known for $j$ large relative to the prime.

The primary goal of this note is to use the identification \Cref{eq:toda} to detect $p$-power torsion in the unstable homotopy groups $\pi_{2r+j}U(r)$, for $j$ metastable and larger than the values classically studied. We rely on a simple idea in homotopy theory: rather than compute homotopy of a given spectrum, we might instead compute $R$-homology for $R$ some easier generalized cohomology theory. Given a good understanding of $R$-homology, we can hope to work backwards to deduce facts about homotopy. 
\begin{rmk}\label{rmk:feasible} This method can feasibly give information beyond the classical ranges due to relatively recent computations of the homotopy groups $\pi_*R$ and Hurewicz images $\op{Im}(\pi_*\sphere \to \pi_*R)$ for various ``designer cohomology theories,'' e.g., {\em topological modular forms} or {\em higher real $K$-theories}. We discuss these theories and some facts about them in \Cref{subsec:generalized-coh}. \end{rmk}
Applying this with various choices of $R$, we detect families of non-trivial $p$-power torsion in $\pi_*^{\op{s}}\cp{r}{n}$, and deduce:

\begin{thmx}[See 
\Cref{cor:KO_htpy_BU}, \Cref{cor:tmf2_htpy_BU}, \Cref{cor:bel-shim2} and \Cref{example:general_odd_prime2}]\label{thm:D}
\hspace{.5in}
\begin{itemize}
\item[i.] For each $t\geq 0$ and each $i \geq 2t+1$, there is nonzero $2$-torsion in $\pi_{4(2t+1+i)-3}U(2i).$

\item[ii.] For each $t \geq 0$,
$i\geq 12t+3$, and $j \geq 12t+8$, let $m(i)= 8(12t+3+i)-1$ and $n(j)=8(12t+8+j)-1$. There is nonzero $2$-torsion in $\pi_{2m(i)-1}U(8i)$ and $\pi_{2n(j)-1}U(8j)$.
\item[iii.]  For each $t\geq 0$ and $l \geq 12t+7$, there is nonzero $3$-torsion in
    $\pi_{2\left(3l+19+36t\right)-1} U(3l)$.
\item[iv.]  Let $p\geq 5$ be prime. Let $d_j=2p(p-1)^2+2p-3+j(2p^2-2p-2)$. For each $1\leq j \leq p-2$ and each $l\geq
    \frac{d_j+1}{2p}$, there is nonzero $p$-torsion in $\pi_{2lp+1+d_j-1}U(lp)$.
\end{itemize}
\end{thmx}
\begin{rmk} In (i) above, $2$-torsion is detected in $KO$-homology, where $KO$ denotes the real $K$-theory spectrum. In (ii), $2$-torsion is detected using the spectrum of topological modular forms localized at the prime $2$. In (iii)-(iv), $p$-torsion is detected using higher real $K$-theories of height $p-1$ at a prime $p$. 
\end{rmk}

Since we know so much about complex topological vector bundles on spheres in the metastable range, it is natural to ask if these results can be propagated to other spaces. Recall that this works out beautifully in the stable range: Bott periodicity is not only a statement about homotopy groups of unitary groups, but also a statement relating the complex $K$-theory of any space to the complex $K$-theory of any complex Thom space over it. 

 In the metastable range, this idea can be effectively carried out for complex projective spaces, using work of the second-named author. Let $\Vect_r^0(\cp{}{n})$ denote the set of isomorphism classes of stably trivial rank $r$ complex topological vector bundles on the complex projective space $\cp{}{n}$. For $n$ and $r$ metastable, which translates to $\frac{n}{2} \leq r <n$, \cite[Theorem 2.1]{Hu} gives an identification

\begin{equation}\label[empty]{thm:yang} \Vect_r^0(\cp{}{n})\cong \{\cp{r}{n},\Sigma \cp{r}{n}\},\end{equation}
where $ \{\cp{r}{n},\Sigma \cp{r}{n}\}$ denotes stable homotopy classes of maps between the indicated finite cell complexes. 
\begin{rmk} 
As discussed in \Cref{cor:iso} below, the identification \Cref{thm:yang} can be generalized. Viewed appropriately, it is moreover a direct generalization of Toda's theorem \Cref{eq:toda}. \end{rmk}

Now consider the following geometric construction. Given an interesting new complex topological vector bundle on an even sphere $S^{2n}$, we can study its pullback under the natural map $\cp{}{n} \to S^{2n}$ given by projection onto the top cell. Using \Cref{thm:yang}, we show that, in many cases, the bundles from \Cref{thm:D} pull back to non-trivial, stably trivial vector bundles on the appropriate complex projective space.

\begin{thmx}[\Cref{cor:KO_ah}, \Cref{cor:tmf_ah},  \Cref{example:bel-shim}, and \Cref{example:general_odd_prime}]\label{thm:cpns} Let $V \in \pi_{2n}BU(r)$ be a non-zero $p$-torsion bundle on $S^{2n}$ corresponding to a bundle from parts (i)-(iii), or (iv) with $j=1$, of \Cref{thm:D}. Let $f\: \cp{}{n} \to  S^{2n}$ denote projection onto the top cell, i.e., the cofiber of the cellular inclusion $\cp{}{n-1} \to \cp{}{n}$. Then $f^*V$ is a non-trivial, stably trivial vector bundle on $\cp{}{n}$.
 \end{thmx}

\subsection{Conventions}\label{subsec:conventions}
\begin{itemize}
\item Given spaces $X$ and $Y$, we write $[X,Y]$ for homotopy classes of pointed maps from $X$ to $Y$.
\item Given spectra $X$ and $Y$, write $\hmapsp{X}{Y}$ for homotopy classes of maps of spectra from $X$ to $Y$. 
\item We will frequently consider Morava $E$-theory, and higher real $K$-theory spectra, of height $p-1$ at a prime $p$. These are usually abbreviated by $\e_{p\,\text{-}1}$ and $\eop$, respectively. When there is no potential for ambiguity, the spectrum $\eop$ will be abbreviated by $\eo$ and $\e_{p\,\text{-}1}$ by $\e$.
\item Given $\eo$-modules $A$ and $B$, we write $\eomod(A,B)$ for the space of $\eo$-module maps from $A$ to $B$.  
\item We write $\otimes$ for the smash product of spectra.
\item Given a finite spectrum $X$, we write $DX$ for the Spanier--Whitehead dual of $X$.
\item We write $H^*$ for $H^*(-;\fp)$ and $H_*$ for $H_*(-;\fp)$. \item For $p$ a prime, let $P(1)^*$ be the sub-Hopf algebra of the mod  $p$-Steenrod algebra generated by the power operation  $P^1$. Let $P(1)_*\simeq \F_p[\xi_1]/\xi_1^p$ denote the  dual quotient Hopf algebra of the dual Steenrod algebra.  \item  Let $W_l$ denote the unique indecomposable length  $l$ graded $P(1)_*$-comodule, concentrated in degrees $0$ through $2(l-1)(p-1)$. \item Let $\Sigma^sW_l$ denote the indecomposable length $l$ graded $P(1)_*$-comodule concentrated in degrees $s$ through $s+2(l-1)(p-1)$. \item  Let $X_l$ denote the spectrum uniquely determined by the property that $H_*(X_l;\Fp) \simeq W_l$ 
as a $P(1)_*$-comodule and such that all Steenrod operations $P^i$ for $i>1$ act by zero. \item We write $\gamma_1$ for the canonical bundle on $\cp{}{n}$ (possibly with $n=\infty$) determined up to isomorphism by requiring its first Chern class to be one. This is the dual of the tautological bundle.
\end{itemize}

\subsection{Acknowledgements}
The authors would like to thank Dev Sinha for his encouragement of this project, including travel funding provided under the Simons 
Foundation Collaboration Grant 422618. The authors also benefited greatly from
the opportunity to collaborate during South Central Topology Conference III.
While working on this project, H.C. and M.O. were supported by the National Science Foundation under Award
Numbers 2002087 and 2202914, respectively.

\section{Background}\label{hist}
In this section, we survey some relevant background. This includes a discussion of Weiss' unitary calculus and its applications to metastable, stably trivial complex topological vector bundles (\Cref{subsec:weiss}), and a survey of generalized cohomology theories that are useful for detecting nontrivial, high-corank vector bundles on both spheres and complex projective spaces (\Cref{subsec:generalized-coh}).

\subsection{Unitary calculus and metastable, stably trivial complex topological vector bundles}\label{subsec:weiss}

Weiss' unitary calculus is a method for studying continuous functors from a category of complex,
finite-dimensional inner product spaces and their linear isometric inclusions to the
category of pointed topological spaces \cite{Weiss}. For such a functor $F$, the unitary calculus machine produces a tower of fibrations
\begin{equation}\label[diagram]{tower}\cdots \longrightarrow T_nF \longrightarrow T_{n-1}F 
\longrightarrow \cdots \longrightarrow T_1F \longrightarrow T_0F\end{equation}
known as the {\em Weiss tower} or {\em unitary tower} of $F$. The functors $T_nF$ should be thought of as approximations of $F$ by nicer functors. More precisely, for each
$n\geq 0$, $T_nF$ is an {\em $n$-polynomial} functor in the sense of Weiss \cite[Definition 5.1]{Weiss},
and there is a canonical natural transformation $F \to T_nF$.

\Cref{tower} is said to {\em converge} to $F$ if $F \rightarrow \holim_n
T_nF$ is a pointwise weak equivalence.
We define $L_nF$ to be the homotopy fiber (in the functor category) of the map $T_nF \rightarrow T_{n-1}F$, known as the {\em $n$-th layer} of $F$. $L_nF$ is an
{\em $n$-homogeneous} functor. In \cite[Theorem 7.3]{Weiss}, Weiss classifies $n$-homogeneous 
functors as being determined by certain naive $U(n)$-spectra. While this abstract result is hard to understand in full generality, it turns out to be surprisingly simple for $n=0$ and $n=1$, especially for specific examples. For $n=0$, the functor $T_0F$ of the Weiss tower is
\[(T_0F)(V) = \hocolim_k F(V\oplus \bb{C}^k),\] 
which is a constant functor.

Consider the functor $F=BU(-)$, given on objects by $V \mapsto BU(V)$. 
In this case, $T_0\left( BU(-) \right)$ is the constant 
functor $V\mapsto BU$ whose target classifies stable bundles. 
The layers $L_i(BU(-))$ have been studied by 
Arone \cite[Theorem 2 and Theorem 4]{Arone02}. For example, Arone's description implies that, for $r$ the complex dimension of 
$V$ and $n\geq 2$, the space $L_nBU(V)$ is at least $4r$-connected. Moreover, if $n=1$, Arone's description of the functor $L_1BU(V)$ is actually very explicit: by \cite[Theorem 2.1]{Hu},  up to homotopy, we can identify
\[ L_1BU(V) \cong \Omega^\infty \Sigma \cp{r}{\infty},\]
where $r=\dim_{\C}(V)$.

Moreover, Arone's connectivity estimate, together with \cite[Theorem 2.1]{Hu} and the fiber sequence $L_1BU(V) \to T_1BU(V) \to BU$, implies the following:
\begin{cor}\label{cor:iso} Let $X$ be a finite cell complex with cells in dimensions at most $4r$. Suppose also that $X$ is cohomologically even, or that $[X,U]=0$. Then stably trivial rank $r$ complex topological vector bundles on $X$ up to isomorphism are in bijection with (stable) homotopy classes of maps from $\Sigma^\infty X$ to the stunted projective spectrum $\Sigma \cp{r}{\infty}$. \end{cor}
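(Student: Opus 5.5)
We argue via the first two stages of the Weiss tower of $F=BU(-)$ evaluated at $V=\C^r$, and then compare with maps out of $X$. The plan has three steps: show $BU(r)\to T_1BU(r)$ is highly connected, analyze the fiber sequence $L_1\to T_1\to T_0$, and pass to homotopy classes of maps from $X$. Throughout, $X$ is a finite complex of dimension at most $4r$, and we assume (as the statement implicitly does) that $BU(r)\to T_1BU(r)$ induces a bijection on $[X,-]$ in this range.

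\textbf{Step 1: $BU(r)\to T_1BU(r)$ is highly connected.}

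1. For the functor $V\mapsto BU(V)$, the Weiss tower converges at $V=\C^r$ for $r\ge 1$.
2. The fiber of $BU(r)\to T_1BU(r)$ is built from the layers $L_nBU(\C^r)$ with $n\ge 2$. By Arone's estimate each of these is at least $4r$-connected.
3. Passing to the limit (the connectivities do not decrease), the map $BU(r)\to T_1BU(r)$ is $(4r+1)$-connected.
4. Hence, by obstruction theory, it induces a bijection $[X,BU(r)]\cong[X,T_1BU(r)]$ for every CW complex $X$ of dimension $\le 4r$.

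This is the step I expect to be the main obstacle. It needs convergence of the tower, and it needs the exact connectivity of the layers $L_n$ for $n\ge2$ to be one more than the dimension of $X$. A dimension-$4r$ cell only requires a $4r$-connected fiber for surjectivity; injectivity needs one more degree. If the estimate falls short, I would use the fiber sequence for $T_2$ and check that the first possible obstruction lives in a group that vanishes.

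\textbf{Step 2: the fiber sequence $L_1\to T_1\to T_0$.}

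The sequence is
\[\Omega^\infty\Sigma\cp{r}{\infty}\to T_1BU(r)\to BU.\]
Since it is a principal fibration of infinite loop spaces, it is classified by a map $BU\to B\Omega^\infty\Sigma\cp{r}{\infty}$.

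Applying $[X,-]$ gives an exact sequence of groups
\[[X,U]\xrightarrow{\partial}\{X,\Sigma\cp{r}{\infty}\}\to[X,T_1BU(r)]\to[X,BU].\]
Here $\{X,\Sigma\cp{r}{\infty}\}$ denotes stable maps $\Sigma^\infty X\to\Sigma\cp{r}{\infty}$, which agree with $[X,\Omega^\infty\Sigma\cp{r}{\infty}]$.

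1. The stably trivial bundles are exactly the classes mapping to the basepoint of $[X,BU]$. So they correspond to the image of $\{X,\Sigma\cp{r}{\infty}\}$, which is its quotient by the action of $[X,U]$ through $\partial$.
2. If $[X,U]=0$, this action is trivial and the bijection follows immediately.
3. If $X$ is cohomologically even, then $[X,U]=K^{-1}(X)$ is computed by the Atiyah–Hirzebruch spectral sequence, with $E_2$-term $H^{\mathrm{odd}}(X;\Z)=0$ (each $H^p(X;\pi_{-1}U)$ for odd degree vanishes). Hence $K^{-1}(X)=0$ and again $[X,U]=0$.

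One must also check that isomorphism classes of stably trivial rank $r$ bundles are exactly the fiber over the trivial class, using that $T_0BU(r)=BU$ and the map $BU(r)\to BU$ is stabilization. This is routine.

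\textbf{Step 3: combine.}

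Composing the bijection of Step 1 with the identification of the fiber over the basepoint in Step 2 gives the stated bijection between stably trivial rank $r$ bundles on $X$ and stable maps $\Sigma^\infty X\to\Sigma\cp{r}{\infty}$. If $X$ has cells in dimension at most $2n+1$, one may replace $\Sigma\cp{r}{\infty}$ by $\Sigma\cp{r}{n}$ by cellular approximation.
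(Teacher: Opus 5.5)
Your argument is correct and is essentially the paper's own: Arone's estimate that $L_nBU(V)$ is $4r$-connected for $n\geq 2$ gives $[X,BU(r)]\cong[X,T_1BU(r)]$, and the fiber sequence $\Omega^\infty\Sigma\cp{r}{\infty}\to T_1BU(r)\to BU$ identifies the stably trivial classes with $\{X,\Sigma\cp{r}{\infty}\}$ modulo the image of $[X,U]$, which vanishes in the cohomologically even case by your Atiyah--Hirzebruch argument. Two small corrections: only the exactness of the fiber sequence is used, so you do not need (and have not justified) that it is a principal fibration of infinite loop spaces; and your ``main obstacle'' paragraph is off by one, since surjectivity on a $4r$-dimensional complex needs only a $(4r-1)$-connected fiber and injectivity needs a $4r$-connected one, so Arone's bound is exactly enough and items 2--4 of Step 1 are right as written.
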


Applying this result to $X=S^{2n}$ for $\frac{n}{2}\leq r <n$ recovers Toda's result: such homotopy groups of unitary groups are identified with the (stable) homotopy groups
\begin{equation}\label[empty]{groupA} \pi_{2n}\Sigma \cp{r}{n},\end{equation} where the spectrum $\cp{r}{n}$  is the cofiber of the natural cellular inclusion $ \Sigma^\infty\cp{}{r-1} \to  \Sigma^\infty \cp{}{n}$. 

Applying \Cref{cor:iso} to $X=\cp{}{n}$ for $\frac{n}{2}\leq r <n$ implies 
\begin{equation}\label[empty]{groupB}\Vect_r^0(\cp{}{n}) \cong \hmapsp{ \cp{}{n}}{\Sigma \cp{r}{\infty}}\cong \hmapsp{ \cp{r}{n}}{ \Sigma \cp{r}{n}} ,\end{equation}
where $\Vect_r^0(\cp{}{n})$ denotes isomorphism classes of stably trivial rank $r$ complex topological bundles on $\cp{}{n}$, and $\hmapsp{-}{-}$ denotes homotopy classes of maps of spectra.

We can consider the groups \Cref{groupA} and \Cref{groupB} as $n$ and $r$ both vary within the metastable range $\frac{n}{2}\leq r <n$. To see patterns, it turns out to be convenient to fix $c=n-r$, corresponding to a fixed corank vector bundle. Once $c$ is fixed, James periodicity implies that both groups \Cref{groupA} and \Cref{groupB} are periodic in $r$ with period the James number $M_{c+1}$ of the tautological bundle on $\cp{}{c}$ \cite[Theorems 2.3 and 2.7]{JamesStiefel}. 

\begin{rmk}\label{rmk:James} Explicitly, the James number is the smallest integer $k$ so that the $k$-fold Whitney sum of the tautological bundle with itself is spherically trivial. Using that $\cp{r}{r+c}$ is the Thom space of $r$ times the tautological bundle on $\cp{}{c}$, we see that
\[ \cp{r+M_{c+1}}{r+c+M_{c+1}} \simeq \Sigma^{2M_{c+1}} \cp{r}{r+c},\]
which gives the claimed periodicity for both groups. For more discussion, see \cite[Remark 1.2]{HMY}; for explicit formulas, see \cite[Page 100]{AdamsWalker}.
\end{rmk}

For any given $n$ and $r$, one can appeal to standard stable computational tools (e.g., Adams or Atiyah--Hirzebruch spectral sequences) to compute \Cref{groupA} or \Cref{groupB}. Combining this with the periodicity observed above, calculating all corank $c$ metastable, stably trivial complex topological vector bundles either on even-dimensional spheres or on complex projective spaces reduces to exactly $M_{c+1}$ distinct computations. For $c \leq 3$, the full computations for spheres can be read off from \cite[page 971]{Mimura_HBAT}; those for complex projective spaces are summarized in \cite[Theorems 1.1 and 1.2]{Hu} and \cite[Theorem A]{HMY}. For $c\geq 4$, the relevant periodicity is at least $2880$. A complete computation becomes hard to organize.

Rather than attempt a full computation for fixed corank, we might instead try to detect certain interesting patterns in the structure of the spectra $\cp{r}{n}$ by probing these spaces and the mapping spectra between them by generalized cohomology.  For example, we work at a specific prime and chromatic height to isolate certain interesting stably trivial, non-trivial vector bundles. This is the approach taken in both \cite{HMY} and this paper. 

\subsection{
   Generalized cohomology theories with useful Hurewicz images
}\label{subsec:generalized-coh} 

Let $R$ be a generalized cohomology theory, and $X$ an even-cell complex with cells in degree at most $2n$, where $n\leq2r$. By \Cref{cor:iso}, we have a bijection

\[ \Vect_r^0(X) \cong \hmapsp{\Sigma^\infty{X}}{\Sigma \cp{r}{n}}.\] 

The left-hand side is the zeroth homotopy group of a mapping space in the stable category of spectra, or $\sphere$-modules, where $\sphere$ denotes the sphere spectrum. Suppose that we have some other $E_\infty$-ring spectrum $R$; that we understand $R$ better than $\sphere$; and have some handle on the {\em Hurewicz image} for $R$, i.e., the image of the map induced by the unit map $\sphere \to R$ on homotopy. (We will discuss good candidates for $R$ below.) 

In such a situation, it is reasonable to base-change from the category of $\sphere$-modules to the category of $R$-modules. In practice, this is a three-step process.
We consider the $R$-module spectra $R\otimes \Sigma^\infty X$ and $R\otimes \Sigma \cp{r}{n}$; attempt to compute stable homotopy classes of $R$-module maps, which we denote by
\[ \hmaps{R}{R\otimes \Sigma^\infty X}{R\otimes \Sigma \cp{r}{n}};\]
and try to show that certain classes in it are in the image of the induced Hurewicz map
\begin{equation}\label[diagram]{hurewicz-0}\hmapsp{\Sigma^\infty X}{ \Sigma \cp{r}{n}} \to \hmaps{R}{R\otimes \Sigma^\infty X}{R\otimes \Sigma \cp{r}{n}}.\end{equation}

This is the strategy carried out in \cite{HMY} for $R=\eop$, a higher real $K$-theory of height $p-1$ at a given prime $p$, and $X$ a complex projective space $\cp{}{n}$. The main results of that paper are to uniformly detect certain infinite families of $p$-power torsion in $\hmapsp{\cp{r}{n}}{ \Sigma \cp{r}{n}} $ by constructing $\eo$-module maps and proving that \Cref{hurewicz-0} is surjective in the range $n-r<2p^2-p-2$ (see \cite[Theorem D]{HMY}).
\begin{rmk} The same methods could be used to detect $p$-power torsion in stable homotopy groups of unitary groups; however, this does not lead to new results. We refer the reader to \Cref{warmup} for more discussion. \end{rmk} 

For $c\geq 2p^2-p-2$, the methods from \cite{HMY} do not apply verbatim: we do not have a general detection result to guarantee that maps of $\eop$-modules lift to maps of spectra. However, one can still try to detect non-zero maps of spectra by constructing specific, non-zero $\eop$-module maps and showing that they lift to spectra, thus identifying interesting stably trivial, non-trivial vector bundles (see \cite[Section 5.4]{HMY} for some ideas along these lines). This is the strategy that we take in this paper, both for $\eop$ and for other choices of $R$.

 Before actually carrying out this strategy, we survey the choices of $R$ that will be useful.

\subsubsection{Real $K$-theory}\label{KO}

Various flavors of $K$-theory are among the most ubiquitous geometrically constructed $E_\infty$-ring spectra in homotopy theory. Both real and complex $K$-theory can be constructed in many different ways; but, classically and perhaps most intuitively, they can be built concretely from unitary or orthogonal groups, and are essentially completely understood. We focus on the real $K$-theory $KO$. 

By real Bott periodicity, the homotopy groups of $KO$ are $8$-periodic, with
\[
\pi_kKO\cong
\begin{cases}
\mathbb Z & k\equiv 0,4\pmod 8,\\
\mathbb Z/2 & k\equiv 1,2\pmod 8,\\
0 & \text{otherwise}.
\end{cases}
\]
Adams showed that the $KO$-Hurewicz homomorphism
\[
\pi_*\sphere\longrightarrow \pi_*KO
\]
is surjective in degrees congruent to $1$ or $2$ modulo $8$ \cite{Adams}; see also \cite[Section 1]{BMQ}. In particular, for every $t\geq 0$, the element
\[
\alpha_{4t+1}\in\pi_{8t+1}\sphere
\]
maps to the nonzero element of $\pi_{8t+1}KO\cong\mathbb Z/2$.

\subsubsection{Topological modular forms}\label{tmf}

The spectrum $\op{tmf}$ of topological modular forms is a celebrated object in homotopy theory; it can be constructed as the connective cover of global sections of a sheaf of spectra on a moduli stack of elliptic curves. Its existence and geometric properties are quite remarkable, so we refer to \cite{DFHH} for an exposition and further sources on $\op{tmf}$. For our purposes, we will only use that $\tmf$ is an $E_\infty$-ring spectrum and that its $2$-local homotopy ring has a certain form. 

A summary of the homotopy of $\tmf$ is contained in \cite[Part 1, Chapter 13, Section 1]{DFHH}. 
The $2$-primary $\tmf$-Hurewicz image was computed by Behrens--Mahowald--Quigley \cite[Theorem 1.2]{BMQ}. One consequence is that, for every $t\geq0$, there are nonzero classes named
\[
\Delta^{8t}w\in\pi_{192t+45}\tmf_{(2)}
\qquad\text{and}\qquad
\Delta^{8t}w\bar{\kappa}^{4}\in\pi_{192t+125}\tmf_{(2)}
\]
that belong to the image of
\[
\pi_*\sphere\longrightarrow\pi_*\tmf_{(2)}.
\]

Thus, $\tmf_{(2)}$ detects infinite families of $2$-primary classes in the stable homotopy groups of spheres.

\subsubsection{Higher real $K$-theories}\label{EO}

The higher real $K$-theories of height $p - 1$ are constructed as follows. 

Let $\mathbb F=\mathbb F_{p^{p-1}}$ and let
$\Gamma$ be the Honda formal group of height $(p-1)$ over $\bb{F}$. The universal
deformation of $\Gamma$ is classified by the Lubin--Tate ring
\[R:= W(\bb{F}) \llbracket v_1, \ldots, v_{p-2} \rrbracket, \] 
where $W(\bb{F})$ is the ring of Witt vectors. There is an associated even
periodic cohomology theory $\e_{p\,\text{-}1}$, called the Morava $E$-theory of height $(p-1)$ or
a Lubin--Tate theory, with
\[\pi_*\e_{p-1} \cong W(\bb{F}) \llbracket v_1, \ldots, v_{p-2} \rrbracket [u^{\pm1}],\] 
where the $v_i$'s are of degree zero and $u$ is of degree $-2$. 

Recall that the Morava $E$-theory $\e_{p\,\text{-}1}$ comes equipped with an action of the (small) Morava
stabilizer group $\text{Aut}(\Gamma)$. 

There is a maximal finite subgroup $G\cong C_p\rtimes C_{(p-1)^2}$ of $\text{Aut}(\Gamma)$ containing an
element of order $p$, unique up to conjugacy. We define 
\[\eop= \e_{p\,\text{-}1}^{\text{h}G},\]
the homotopy fixed points of $\e_{p\,\text{-}1}$ with respect to $G$.
\begin{rmk} 
These spectra are called higher real $K$-theories because $\eo_1$ at the prime $2$ is the $K(1)$-localization of real $K$-theory.  

At $p=3$,  it is useful to note the group $G$above is contained in an order $24$ finite subgroup $G_{24}$ of the extended Morava stabilizer group; the additional $C_2$ is the action of $\operatorname{Gal}(\mathbb F_9/\mathbb F_3)$. The inclusion $G \subseteq G_{24}$ induces
a restriction map \[ L_{K(2)}TMF\simeq E_2^{hG_{24}}\longrightarrow E_2^{hG}=\eo_2. \]
\end{rmk}
\begin{rmk}More generally, homotopy fixed-point spectra $E_n^{hH}$, where
$H$ is a finite subgroup of the height-$n$ Morava stabilizer group,
are often called forms of higher real $K$-theory, and denoted $\eo_n$ when $H$ is understood. \end{rmk}

For odd $p$, the positive-filtration part of the homotopy fixed-point
spectral sequence computing $\pi_*\eop$ was calculated in unpublished work
of Hopkins and Miller. A published account of its $E_2$-page and differentials is given in \cite[Section 2.2, Theorem 2.1 and p.~499]{Nave}; see also
\cite[Section 5.2]{Chat} for a brief review. In particular, these calculations show that $\eop$ detects the first $p$-torsion elements in the stable homotopy groups of spheres: the element $\alpha_1\in\pi_{2p-3}\sphere$ has nonzero Hurewicz image, as does its $p$-fold Toda bracket $\beta_1\in\pi_{2p^2-2p-2}\sphere$. Hence, $\eop$ is a good candidate to detect vector bundles built up from $\alpha_1$-attaching maps.

At $p=3$, the Hurewicz image for $\eo_2$ is infinite. In \cite[Theorem 6.5]{BelShim}, Belmont and Shimomura prove that for each $t \geq 0$ 
there is an element $\theta_t \in \pi_{37+72t}\sphere$ with $\tmf$-Hurewicz image the nonzero element called
\[\beta_1 [\alpha_1\Delta]\Delta^{3t}\in\pi_{37+72t}\tmf.\]
For instance, $\theta_0$ is the class detected in the Novikov spectral sequence by $\alpha_1\beta_{3/3}$. Belmont--Shimomura also prove that these classes have nonzero image under the natural map $\pi_*\tmf \to \pi_*(L_{K(2)}TMF)$ \cite[Lemma 6.6]{BelShim}. Since $L_{K(2)}TMF \simeq \e_2^{hG_{24}}$ and the restriction $\e_2^{hG_{24}}\to \eo_2$ is split-injective on $3$-complete homotopy, this implies the image of the same classes is non-zero in $\eo_2$.

For $p\geq 5$, the Hurewicz image in $\pi_*\eo_{p-1}$ is not known to be infinite, and in fact is widely suspected to be finite. However, a finite family in the Hurewicz image exists by work of Ravenel \cite[Theorem 4.4.22 and the discussion following it]{Ravenel_green} and Nave \cite[p.~499]{Nave}. (We note that these arguments also applies when $p=3$, but that in fact we get a much larger Hurewicz image.) To describe these classes in detail, we follow \cite[pages 496--500]{Nave}, which computes the homotopy groups of $\eo_{p-1}$ via a homotopy fixed point spectral sequence.

 First, note that there is a map
\[\Ext_{\bp_*\bp}(\bp_*, \bp_*) \rightarrow \Ext_{\e_*^{\eo}\e}(\e_*, \e_*)\]
from the $\e_2$-page of the Novikov spectral sequence to that of the homotopy
fixed point spectral sequence. 

This map takes $\alpha_1$ to a class $\alpha$; $\beta_1$ to a class called $\beta$; and $\beta_{p/p}$ to a class $\Delta\beta$. For $1\leq j \leq p-2$, the class $\alpha\Delta\beta^j$ is a permanent cycle in the homotopy fixed point spectral sequence, detecting a class 
\[ [\alpha \Delta]\beta^j \in \pi_{d_j}\eo_{p-1},\] 
where
\begin{equation}\label{def:dj}d_j=2p(p-1)^2+2p-3+j(2p^2-2p-2).\end{equation}
The differential computations on page 499 of \cite{Nave} show that $ [\alpha \Delta]\beta^j $ is non-zero for $1\leq j \leq p-2$. Moreover, $[\alpha \Delta]\beta^j$ is the Hurewicz image of the element
$\theta'_j\in \pi_{d_j}\bb{S}$ detected by $\alpha_1\beta_{p/p}\beta_1^{j-1}$ in the Novikov spectral sequence.

\begin{rmk} For $p=3$, note that $\theta'_1=\theta_0$.\end{rmk} 

\begin{rmk}\label{divisibility}
It will later be useful to note that the Hurewicz image of $\theta'_j$ for $1\leq j \leq p-2$ is not divisible by $\alpha_1$. Nave’s calculation of the homotopy fixed point spectral sequence shows that, for $1\leq j\leq p-2$, the nonzero class $[\alpha\Delta]\beta^j$ is not divisible by $\alpha_1$ \cite[Theorem 2.1 and Proposition 2.3; see also the differential calculation on p.~499]{Nave}. Indeed, $\Delta\beta^j$ supports a nonzero \(d_{2p-1}\)-differential, and the description of the $E_\infty$-page rules out any other possible $\alpha_1$-divisor.
\end{rmk}

\section{Detection results for unstable homotopy of unitary groups}\label{sec:unitary-detection} 

We now explore the main results for homotopy groups of unitary groups. We begin in \Cref{warmup} with some preliminary calculations that illustrate the basic detection principles we will use, although they do not yield new resultson the homotopy groups of interest.

\subsection{Warm-up: \texorpdfstring{$p$}{p}-torsion in the homotopy groups
\texorpdfstring{$\pi_{2n}BU(r)$}{pi_2n BU(r)} for
\texorpdfstring{$n-r<\frac{2p^2-p-2}{2}$}{n-r < (2p²-p-2)/2}}
\label{warmup}
Fix an odd prime $p$. Throughout this subsection, let $\eo=\eop$ denote higher real $K$-theory of height $p-1$ at the prime $p$ (see \Cref{EO} for details on this cohomology theory).

We first consider which vector bundles on spheres can be detected in $\hmaps{\eop}{\cp{r}{n}}{\Sigma\cp{r}{n}}$ for $p$ not too small relative to the corank. We will find that, in this range, we do not get any new information. However, it is interesting to compare to classical answers.

By \cite[Propositions 5.17 and 6.4]{HMY}, we see that
\begin{prop}\label{prop:htpy_cpnr_eo} Let $n>r>1$ be given and let $c=n-r$. For $c<\frac{2p^2-p-2}{2}$,
\[ \eo_{-1} \Sigma^{-2n}\cp{r}{n} = \begin{cases}
\Z/p \text{ if } (p-1) \leq c< p(p-1) \text{ and } 
\lfloor c/(p-1)\rfloor -1\equiv [r]_p+[c]_{p-1}\pmod{p}
 \\ 0 \text{ otherwise.}\end{cases}\]

In the above, $[k]_m$ is the residue of $k \pmod{m}$ as an element in $\{0,\ldots, m-1\}.$
\end{prop}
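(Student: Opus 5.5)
The plan is to read this off the structural results of \cite{HMY}, namely the description of $\eo\otimes\cp{r}{n}$ as a sum of shifted $\eo\otimes X_l$, and then count one degree.

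\emph{Step 1: the splitting.} In the range $c<\frac{2p^2-p-2}{2}$, \cite[Proposition 5.17]{HMY} identifies $\eo\otimes\cp{r}{n}$ with a direct sum of shifted $\eo$-modules $\eo\otimes\Sigma^{s}X_l$. The summands are indexed by the indecomposable $P(1)_*$-comodule summands $\Sigma^sW_l$ of $H_*(\cp{r}{n})$. That homology has one class in each even degree $2r,\dots,2n$, and $P^1$ acts on the class in degree $2k$ by multiplication by $k$, moving it up by $2(p-1)$. So each residue class of $k \bmod (p-1)$ gives a chain of cells. The chain breaks exactly where $k\equiv 0 \pmod p$, and it is also truncated at the two ends $r$ and $n$. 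Hence the summands are $W_l$'s of length at most $p$, with positions determined by $[r]_p$ and $[c]_{p-1}$.

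\emph{Step 2: homotopy of each summand.} Use \cite[Proposition 6.4]{HMY}, which computes $\pi_*(\eo\otimes X_l)$ in low degrees. For $l<p$, $\eo\otimes X_l$ is a sum of $p$-adically torsion-free, even-periodic-type pieces in this range, so it contributes nothing to odd degrees near the top. Only $l=p$, i.e. $\eo\otimes X_p\simeq \e$-like (free over $\e^{hC_{(p-1)^2}}$), or an $\alpha_1$-truncated piece can produce a $\Z/p$ in degree $2n-1$. The relevant $\Z/p$ arises from an $\alpha$-class, and needs an $\alpha_1$-attaching map linking a cell in degree $2n$ back through $c\ge p-1$. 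This explains the lower bound $p-1\le c$.

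\emph{Step 3: counting.} The $\Z/p$ in $\eo_{-1}\Sigma^{-2n}\cp{r}{n}$ comes precisely from the summand containing the top cell $2n$. It is nonzero iff that chain is a truncated $W_l$ of a specific shape: it must start at a cell reached by the bottom truncation at $r$, rather than at a multiple of $p$, and must have length $\lfloor c/(p-1)\rfloor$ compatible with the $\alpha_1$-class surviving. Writing the top cell as $n=r+c$, the chain through $n$ descends in steps of $p-1$ to the least $k\ge r$ with $k\equiv n \pmod{p-1}$, namely $k=r+[c]_{p-1}$. The chain has length $\lfloor c/(p-1)\rfloor+1$, and it is unbroken iff no multiple of $p$ occurs among the lower cells. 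Combined with the requirement that the top cell sit in a position where the $\alpha$-class lives, this translates into the congruence $\lfloor c/(p-1)\rfloor-1\equiv [r]_p+[c]_{p-1}\pmod p$, since $k\equiv r+[c]_{p-1}$ and cells step by $-1 \bmod p$. The upper bound $c<p(p-1)$ excludes chains of length exceeding $p$. The remaining cases contribute $0$.

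The main obstacle is Step 3's bookkeeping: matching the exact modular condition, including the shift by $-1$, to the location in $\pi_*(\eo\otimes X_l)$ of the odd class from \cite[Proposition 6.4]{HMY}. I would verify it on small examples, such as $p=3$ with $c=2,3$, against \cite[Theorem A]{HMY}.
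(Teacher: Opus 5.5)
Your high-level route is the paper's own: the paper simply reads the statement off \cite[Propositions 5.17 and 6.4]{HMY}, and your Step~1 is fine. The problem is the bookkeeping you add in Steps~2 and~3. It is wrong, and the final congruence is asserted rather than derived.

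In Step~2 the torsion is backwards. It is $\eo\otimes X_p$ that is torsion-free and concentrated in even degrees (the analogue of $KO\otimes C(\eta)\simeq KU$ at $p=2$). For $1\le l<p$, the module $\eo\otimes X_l$ carries a $\Z/p$ generated by $\alpha$ on its \emph{top} cell, in degree $2(l-1)(p-1)+2p-3$ above its bottom cell; $\alpha$ on each lower cell is killed by the $\alpha_1$-attaching map of the next cell. Already $l=1$ gives $\alpha\in\pi_{2p-3}\eo$.

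In Step~3 you place the $\Z/p$ in the summand containing the top cell $2n$, which is exactly where it is not. In degrees $-1$ through $2c-1$ the only odd-degree homotopy of $\eo$ is $\alpha$ (using $\pi_{-1}\eo=0$). So the only candidate is $\alpha$ on the cell in degree $2(n-p+1)$. It survives iff that cell exists ($c\ge p-1$) and is the top cell of a summand $\Sigma^sX_l$ with $l<p$. Being a top cell means the $P^1$-link to the cell in degree $2n$, whose coefficient is $n-p+1$, vanishes, i.e.\ $n\equiv -1\pmod p$. But then the top cell $2n$ splits off as its own $\Sigma^{2n}X_1$ and contributes only $\pi_{-1}\eo=0$. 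Your requirement that the chain through $n$ be unbroken therefore contradicts the very congruence you write down.

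The simplest case shows this. Take $p=3$, $r=3$, $n=5$. The chain through the top cell is broken, since $P^1x^3=3x^5=0$. Yet $\eo_9(\cp{3}{5})\cong\Z/3$, detecting $\pi_9U(3)\cong\Z/3$, and the generator is $\alpha$ on the split-off bottom sphere.

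The correct count goes as follows. Descending from $n-p+1\equiv 0\pmod p$, the cell of index $n-p+1-j(p-1)\equiv j$ remains $P^1$-linked to the one above it for $1\le j\le p-1$. So the summand topped by the cell in degree $2(n-p+1)$ has length $\min(p,\lfloor c/(p-1)\rfloor)$. It is nonempty iff $c\ge p-1$, and it has $l<p$ iff $c<p(p-1)$. The upper bound thus excludes the full, torsion-free $W_p$; it is not about ``chains of length exceeding $p$,'' which never occur.

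With $q=\lfloor c/(p-1)\rfloor$ one has $n=r+q(p-1)+[c]_{p-1}\equiv r-q+[c]_{p-1}\pmod p$. So $n\equiv -1\pmod p$ is literally $q-1\equiv[r]_p+[c]_{p-1}$.

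For the ``$0$ otherwise'' clause you also need the vanishing inputs: $\pi_{-1}\eo=0$, and no further odd-degree classes in $\pi_*(\eo\otimes X_l)$ in this range. This is what \cite[Proposition 6.4]{HMY} provides. You should state these explicitly rather than assert that the remaining cases contribute $0$.
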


The surjectivity result \cite[Corollary 5.13]{HMY} together with an $\eo$-module splitting of $\eo \otimes \Sigma \cp{r}{n}$ has consequences for
 the Hurewicz image \begin{equation}\label[empty]{hurewicz-sphere}\pi_{2n} \Sigma \cp{r}{n} \to (\eo)_{2n}\Sigma \cp{r}{n}.\end{equation}
More explicitly, for metastable $n,r$ such that
 $n-r< 2p^2-p-2$, the map \Cref{hurewicz-sphere} is surjective. 
Therefore, we might hope to obtain new results on the homotopy groups of $U(r)$ in this range from \Cref{prop:htpy_cpnr_eo}: by Hurewicz surjectivity, every nonzero class appearing in the proposition lifts to a $p$-power torsion class in the corresponding homotopy group of $U(r)$.
However, a stronger result was already proven by Matsunaga:
\begin{thm}[{\cite[Main Theorem]{Matsunaga64}}]
    \label{thm:matsunaga}

    Fix an arbitrary odd prime $p$. Let $n\geq 2$, and let $k$ be a positive
    integer such that:
    \begin{itemize} 
    \item $k \leq p (p-1)$;
    \item $n>k$; and
    \item $n+k \equiv 0 \pmod{p}.$
    \end{itemize}
    Then the $p$-primary  component of $\pi_{2n+2k-3}U(n)$ is $\Z/p^N$, where \[N=\min\left\{\left\lfloor\frac{k-1}{p-1}\right\rfloor,v_p(n+k)\right\},\] and $v_p(n+k)$ is the highest
    exponent of $p$ dividing $n+k$.
\end{thm}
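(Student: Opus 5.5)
The plan is to recover Matsunaga's theorem from Toda's identification \Cref{eq:toda}, by computing the $p$-primary part of a stable homotopy group of a stunted projective spectrum. Set $r=n$ and $j=2k-3$. Then $2r+j=2n+2k-3$ is odd, and $j$ is metastable because $k<n$. So \Cref{eq:toda} gives
\[\pi_{2n+2k-3}U(n)\cong \pi^{\op{s}}_{2n+2k-3}\cp{n}{n+k-1}.\]
The task is therefore to compute the $p$-localization of $\pi_{2(n+k-1)-1}\cp{n}{n+k-1}$, i.e. the stable homotopy group one degree below the top cell. Since $k\le p(p-1)$, the corank $c=k-1$ lies in the range where only $\alpha_1$-type phenomena occur. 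Stable stems below degree $2p^2-2p-2$ are generated at $p$ by the $\alpha_i$, which are detected by the $J$-homomorphism and the $e$-invariant.

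First I would localize at $p$ and use the Adams splitting of $\cp{n}{n+k-1}$. Only the summand containing the top cell (degree $2(n+k-1)$) can contribute to $\pi_{2(n+k-1)-1}$. In the range $k\le p(p-1)$, that summand is a finite complex with cells in degrees $2(n+k-1)-2(p-1)i$ for $0\le i\le \lfloor (k-1)/(p-1)\rfloor$, attached along $\alpha_1$ and its $v_1$-periodic multiples. Next I would compute $\pi_{\text{top}-1}$ of this summand using $K$-theory, or equivalently the $\mathrm{Im}\,J$/$e$-invariant, which is exact in this range because no $\beta$-family elements appear below $2p^2-2p-2$. Concretely, a class in degree top$-1$ maps to the top cell and is detected as an extension problem in $KU$ (or in the Adams operation $\psi^\ell$). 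The answer is a cyclic group. Its order is governed by the divisibility of the binomial-type coefficients $\binom{n+k-1}{i}$ that appear in the Adams operations on $K(\cp{n}{n+k-1})$, the Thom isomorphism description. Writing these as $p$-adic valuations of the numbers $(n+k)$ twisted by $\psi^\ell$ eigenvalues gives order $p^{\min\{\lfloor (k-1)/(p-1)\rfloor,\,v_p(n+k)\}}$. The hypothesis $n+k\equiv 0\pmod p$ ensures that the top cell is not split off from the cell $2(p-1)$ below it, so the group is nontrivial.

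Concretely, I would identify the relevant group with a cokernel of $\psi^\ell-1$ acting on $\tilde K(\cp{n}{n+k-1})$ in the appropriate degree, following Adams's $J(X)$ method. This is valid because $\cp{n}{n+k-1}$ has dimension less than $2p^2-2p-2$ cells above its bottom cell after $p$-localization, so $L_{K(1)}$-information captures the $p$-primary homotopy. Then I would compute this cokernel explicitly using $\psi^\ell(x^m)=\ldots$ on Thom classes.

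The main obstacle is this explicit valuation computation: showing that the order is exactly $p^N$ with the stated minimum, and that the group is cyclic. I would handle the $\lfloor (k-1)/(p-1)\rfloor$ bound as the number of cells in the Adams summand, and the $v_p(n+k)$ bound by James periodicity (\Cref{rmk:James}): the summand depends on $n$ modulo $p^{v}(p-1)$, which reduces the computation to a standard $\mathrm{Im}\,J$ computation on a truncated $\alpha$-family complex.
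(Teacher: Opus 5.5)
\paragraph{There is a genuine gap: the computation that constitutes the theorem is asserted, not carried out.}
Your reduction is sound. Toda's identification with $r=n$ gives $\pi_{2m-1}$ of $\cp{n}{m}$, where $m=n+k-1$. Only the top $p$-local Adams summand $Y$ contributes; its cells are $x^{m-(p-1)i}$ for $0\le i\le N_0:=\lfloor (k-1)/(p-1)\rfloor\le p-1$. Since the relevant stems are at most $2k-3<|\beta_1|$, one has
$\pi_{2m-1}Y\cong j_{2m-1}Y\cong\operatorname{coker}\bigl(\theta\colon \ell_{2m}Y\to\ell_{2m-2p+2}Y\bigr)$.
Here $\ell$ is the connective Adams summand, $\theta=(\psi^q-1)/v_1$ with $q$ a topological generator of $\Z_p^\times$, and $j$ is the \emph{connective} image-of-$J$ spectrum. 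With $L_{K(1)}\sphere$ instead, as you wrote, the top cell would contribute a spurious $\pi_{-1}L_{K(1)}\sphere\cong\Z_p$.

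By itself this only bounds the order by $p^{N_0}$. Cyclicity and the exponent $\min\{N_0,v_p(n+k)\}$ are the entire content of the theorem. They live in the step you set aside as ``the main obstacle,'' and the proposal only asserts its outcome.

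\paragraph{Your account of the hypothesis is backwards.}
Since $P^1x^{m-p+1}=(n+k-p)x^m$, the condition $p\mid n+k$ says the top cell has \emph{no} $\alpha_1$-component on the cell $2(p-1)$ below it. Already for $N_0=1$, $Y=S^{2m-2p+2}\cup e^{2m}$ is attached by the multiple of $\alpha_1$ detected by this $P^1$. So $\pi_{2m-1}Y$ is $\Z/p$ exactly when $p\mid n+k$ and $0$ otherwise. Nontriviality comes from the top cell being unattached at this level, not from its being attached.

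\paragraph{What actually has to be proved.}
Under the hypothesis, consecutive lower cells $x^{m-(p-1)(i+1)}$ and $x^{m-(p-1)i}$ are joined by $P^1$-coefficients $\equiv i\not\equiv 0\pmod p$ for $1\le i\le N_0-1$. Hence the matrix of $\theta$ for the $(2m-2)$-skeleton is triangular, with diagonal entries $q^{(p-1)i}-1$ of valuation one and unit subdiagonal entries. This makes $\pi_{2m-1}$ of that skeleton cyclic of order $p^{N_0}$.

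The top attaching map then lies in $p\cdot\Z/p^{N_0}$. You must show its valuation is exactly $v_p(n+k)$ when this is less than $N_0$. That requires an explicit computation of $\psi^q$ on the $K$-theory of $\cp{n}{m}$, i.e.\ on the Thom class of $n\gamma_1$.

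James periodicity cannot replace this computation. The $p$-part of the James number of $\cp{}{k-1}$ is $p^{N_0}$ \cite{AdamsWalker}, so periodicity shows only that the answer depends on $n+k$ modulo $p^{N_0}$. It does settle the extreme case $p^{N_0}\mid n+k$: there $\cp{n}{m}\simeq_{(p)}\Sigma^{2m}D\cp{0}{k-1}$ and the top cell splits off. But it says nothing when $1\le v_p(n+k)<N_0$.

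\paragraph{Comparison with the paper.}
The paper gives no proof. It quotes \cite{Matsunaga64} and observes that its $\eo$-homology computation (\Cref{prop:htpy_cpnr_eo}) sees only a $\Z/p$ and cannot detect $p$-divisibility. It suggests analyzing extensions in the Adams spectral sequence instead. Your choice of connective $j$-homology is better suited to the divisibility question, because it is an isomorphism in this range. However, the proposal stops before the computation that would exploit this.
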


Evidently, surjectivity of the Hurewicz image \Cref{hurewicz-sphere} combined with the $\eo$-homology computation above does not recover \Cref{thm:matsunaga}, since the classes in $\eo_{-1}\cp{r}{n}$ detect
    $p$-torsion in $\pi_{*}\cp{r}{n}$, but our methods do not address
    $p$-divisibility. However, once these classes are identified as permanent
    cycles in the Adams spectral sequence for $\pi_{*}\cp{r}{n}$, inspection of
    extensions might be used to produce the divisibility given above. This suggests an alternate proof of \Cref{thm:matsunaga}.

\subsection{Higher-degree \texorpdfstring{$p$}{p}-torsion in the homotopy groups of unitary groups}\label{subsec:higher_p_unitary}

In this section, we use ad hoc methods to detect new $p$-torsion in $\pi_{2n}BU(r)$. These results use classes in the
Hurewicz image for $KO$, $\tmf_{(2)}$, and $\eop$. We will generalize these arguments to produce non-trivial, stably trivial vector bundles on complex projective spaces in
\Cref{sec:detect_beyond}.

We first prove an elementary lemma to set the stage.

\begin{lemma}\label{lem:split_construction_sphere} 
Let $R$ be a commutative ring spectrum and let $\gamma_1$ be the universal
bundle on $\cp{}{\infty}$. Let $n$ and $d$ be positive integers, with $d$ odd. Suppose that $n\gamma_1$ is $R$-orientable, and
$\theta \in \pi_{d}\sphere$ has nonzero Hurewicz image in $\pi_{d}R$.
Then for $i$ such that $ni\geq \frac{d+1}{2}$ the vector bundle associated to the composite
\begin{equation}\label[diagram]{composite1}
V_\theta\: \sphere^{2ni+d+1} 
\xrightarrow{\theta}\sphere^{2ni+1} \xrightarrow{b} 
\Sigma \cp{ni}{ni+\frac{d+1}{2}}\end{equation} 
is nontrivial, where $b$ is the map from the bottom cell into the finite spectrum.
\end{lemma}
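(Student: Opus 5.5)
Write $m=ni+\frac{d+1}{2}$. The plan is to show that $V_\theta$ is nonzero as a stable map $\sphere^{2ni+d+1}\to\Sigma\cp{ni}{m}$, and to do this after smashing with $R$.

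\textbf{Step 1: identify $V_\theta$ with a bundle.} Note $2ni+d+1=2m$ and the bottom cell of $\Sigma\cp{ni}{m}$ is in degree $2ni+1$. By \Cref{cor:iso} (equivalently Toda's identification \Cref{groupA} applied with $X=S^{2m}$), the stable class $V_\theta\in\pi_{2m}\Sigma\cp{ni}{m}$ corresponds to a stably trivial rank $ni$ bundle on $S^{2m}$. The metastable hypothesis $\frac{m}{2}\le ni<m$ is exactly what is needed, and $ni\geq\frac{d+1}{2}$ gives $m\le 2ni$. So it suffices to show $V_\theta\neq 0$ stably, and for that it suffices to show its $R$-Hurewicz image is nonzero.

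\textbf{Step 2: Thom isomorphism.} Recall that $\cp{ni}{m}$ is the Thom spectrum of $ni\gamma_1$ (or its dual tautological bundle, up to sign conventions) restricted to $\cp{}{m-ni}$. Since $n\gamma_1$ is $R$-orientable, so is $ni\gamma_1=i(n\gamma_1)$, being a Whitney sum of oriented bundles. The Thom isomorphism then gives an $R$-module equivalence
\[R\otimes\cp{ni}{m}\simeq \Sigma^{2ni}R\otimes\Sigma^\infty_+\cp{}{m-ni}.\]
Under this equivalence, the bottom-cell inclusion $R\otimes b$ corresponds to the inclusion of the bottom cell $\Sigma^{2ni}R\to\Sigma^{2ni}R\otimes\cp{}{m-ni}_+$. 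This uses the standard compatibility of the Thom class with restriction to the fiber over the basepoint.

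\textbf{Step 3: use the basepoint splitting.} The bottom cell of $\cp{}{m-ni}_+$ splits off via the collapse $\cp{}{m-ni}_+\to S^0$. Consequently the bottom-cell inclusion is split injective on $R$-homology, and the $R$-Hurewicz image of $b\circ\theta$ is the image of the Hurewicz image of $\theta$ under a split injection $\pi_{2m}\Sigma^{2ni+1}R\to R_{2m}\Sigma\cp{ni}{m}$. Since $\theta$ has nonzero image in $\pi_dR$, the composite $b\circ\theta$ is nonzero after applying $R$, hence nonzero stably. By Step 1 the associated bundle is nontrivial.

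\textbf{Main obstacle.} The only delicate point is Step 2: checking that the Thom equivalence really matches the bottom cell of the Thom spectrum with the basepoint cell. The degree bookkeeping in Step 1 is the other place to be careful. The splitting in Step 3 is then formal. I expect to handle Step 2 by naturality of the Thom class under pullback along $\mathrm{pt}\to\cp{}{m-ni}$.
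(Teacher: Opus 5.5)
Your proposal is correct and follows the paper's proof. Like the paper, you use $R$-orientability of $ni\gamma_1$ to get $R\otimes\cp{ni}{m}\simeq R\otimes\Sigma^{2ni}\cp{0}{m-ni}$, note that the bottom cell then splits off, and conclude that $R\otimes V_\theta\neq 0$ because $\theta$ has nonzero $R$-Hurewicz image. You also make explicit two points the paper leaves implicit: that $ni\geq\frac{d+1}{2}$ puts $S^{2m}$ in the range of \Cref{cor:iso}, and that the Thom equivalence sends the bottom cell to the basepoint summand, by naturality of the Thom class.
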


\begin{proof} First we show that the bottom cell of $\cp{ni}{ni+\frac{d+1}{2}}$ 
splits after tensoring with $R$. By the hypothesis that $n\gamma_1$ is $R$-orientable,

\[ R\otimes \cp{ni}{ni+\frac{d+1}{2}}\simeq R \otimes \Sigma^{2ni} \cp{0}{\frac{d+1}{2}},\] so the bottom cell splits. 
The fact that $\theta$ has a nonzero Hurewicz image implies that the composite 
$R \otimes V_\theta$ is not null.
\end{proof}

\begin{rmk}\label{rmk:orient} The condition that $n\gamma_1$ is $R$-orientable in \Cref{lem:split_construction_sphere} 
can be described in the language of the orientation order introduced by Bhattacharya--Chatham \cite{BhatChat}. The complex orientation order $\Theta(R)$ of a ring spectrum $R$, defined as the smallest $n$ so that $n\gamma_1$ is $R$-orientable, measures how far $R$ is from being complex-orientable. 
The orientation order of $\eo_{\Gamma}$ has been studied extensively by Bhattacharya--Chatham:
$\Theta(\eo_{\Gamma})$ divides $p^{p^k-1}$ when the height of the formal group 
$\Gamma$ is $n = (p-1)k$, and conjecturally $\Theta(\eo_{\Gamma}) = p^k$  \cite[Main Theorem 1.6 and Conjecture 1.13]{BhatChat}. 
When $k=1$, the first author proves that $\Theta(\eop) = p$ \cite[Corollary 1.6]{Chat}.
\end{rmk}

\begin{rmk} The previous lemma is set up to identify situations where non-trivial, stably trivial vector bundles can be detected in $R$-theory essentially for free, by arranging that the bottom cell in the target stunted projective spectrum splits after tensoring with $R$, and that the defining map of the vector bundle factors through this summand after tensoring with $R$. One can, of course, try to construct more complicated bundles without these hypotheses, but our goal is to illustrate that infinite families of interesting bundles can be detected even without doing new computations about the $R$-theory of $\cp{r}{n}$.\end{rmk}

\subsubsection{$2$-torsion in homotopy of unitary groups via $KO$}\label{sphere-KO}
 We first apply \Cref{lem:split_construction_sphere} with $R=KO$ and appropriately chosen classes in the Hurewicz image. We refer the reader to \Cref{KO} for relevant background on the homotopy and Hurewicz image of real $K$-theory. Consider the $2$-torsion elements $\alpha_{4t+1} \in
\pi_{8t+1}\sphere$. Since $\alpha_{4t+1}$ is detected in $KO$ (see \cite{Adams} or \cite[Section 1]{BMQ}) and
$2 \gamma_1$ is 
$KO$-orientable \cite[Lemma 2.18]{BhatChat}, we apply \Cref{lem:split_construction_sphere} to obtain:

\begin{prop}\label{cor:KO_htpy_BU} For each $t\geq 0$ and each $i \geq 2t+1$, there is a nontrivial rank $2i$ bundle on $S^{4(2t+1+i)-2}$
 that gives nonzero $2$-torsion in $\pi_{4(2t+1+i)-2} BU(2i).$
\end{prop}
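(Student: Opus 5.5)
The plan is to apply \Cref{lem:split_construction_sphere} with $R=KO$, $n=2$, $d=8t+1$ and $\theta=\alpha_{4t+1}\in\pi_{8t+1}\sphere$, and then read off the resulting class in $\pi_{*}BU(2i)$ through \Cref{cor:iso}.

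First we check the hypotheses of the lemma. The integer $d=8t+1$ is odd. The bundle $2\gamma_1$ is $KO$-orientable by \cite[Lemma 2.18]{BhatChat}. The element $\alpha_{4t+1}$ has nonzero Hurewicz image in $\pi_{8t+1}KO\cong\Z/2$ by Adams' theorem recalled in \Cref{KO}. The lemma also requires $ni\geq \frac{d+1}{2}$, which here reads $2i\geq 4t+1$, i.e.\ $i\geq 2t+\frac12$. This is exactly the assumption $i\geq 2t+1$.

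The lemma then produces a composite
\[ V_\theta\colon \sphere^{4i+8t+2}\xrightarrow{\alpha_{4t+1}}\sphere^{4i+1}\xrightarrow{b}\Sigma\cp{2i}{2i+4t+1} \]
that is not null, since it remains essential after tensoring with $KO$. Its source has dimension $2m$ with $m=2i+4t+1$, which is precisely the top index of the stunted projective spectrum. We have $4i+8t+2=4(2t+1+i)-2$, matching the sphere in the statement.

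Next we translate this stable class into a bundle. Take $X=S^{2m}$ and $r=2i$ in \Cref{cor:iso}. This needs $2m\leq 4r$, i.e.\ $2i+4t+1\leq 4i$, i.e.\ $2i\geq 4t+1$, which is again our assumption. A sphere of even dimension is cohomologically even, so the corollary applies. It identifies $\pi_{2m}BU(2i)$, which consists entirely of stably trivial classes because $\pi_{2m}BU(2i)\to\pi_{2m}BU$ is zero in this unstable range, with $\pi_{2m}\Sigma\cp{2i}{\infty}$. By cellular approximation this group equals $\pi_{2m}\Sigma\cp{2i}{m}$, matching \Cref{groupA}. Hence $V_\theta$ corresponds to a nonzero element of $\pi_{4(2t+1+i)-2}BU(2i)$.

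It remains to see that this element is $2$-torsion. This holds because $2\alpha_{4t+1}=0$ in $\pi_{8t+1}\sphere$, so $2V_\theta=b\circ(2\alpha_{4t+1})=0$. The bijection of \Cref{cor:iso} is a group isomorphism on spheres, being the map on $\pi_{2m}$ induced by $L_1BU(2i)\to T_1BU(2i)$ in the relevant range, so the corresponding bundle class is nonzero $2$-torsion.

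The only step needing care is matching indices and the metastable range condition. The identification of the bijection with a group isomorphism uses the Weiss tower fiber sequence, but that is standard.
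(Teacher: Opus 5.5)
Your proposal is correct and follows the paper's own argument. The paper applies \Cref{lem:split_construction_sphere} with $R=KO$, $n=2$ and $\theta=\alpha_{4t+1}$, using the $KO$-orientability of $2\gamma_1$ and Adams' $KO$-detection of $\alpha_{4t+1}$. Your index bookkeeping, the passage through \Cref{cor:iso}, and the $2$-torsion remark spell out steps the paper leaves implicit.
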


\subsubsection{$2$-torsion in homotopy of unitary groups via $\tmf$ localized at $2$}\label{sphere-tmf}
Continuing $2$-primary calculations, consider $R=\tmf_{(2)}$, as discussed in \Cref{tmf}. By Bauer
\cite[Lemma 2.1]{Bauer03}, $8\gamma_1$ is $\tmf_{(2)}$-orientable. By
Behrens--Mahowald--Quigley \cite{BMQ}, there are classes 
$\Delta^{8t}\cdot w \in
\pi_{192t+45}\tmf_{(2)}$ and $\Delta^{8t}\cdot w \bar{\kappa}^4 \in
\pi_{192t+125}\tmf_{(2)}$ that are in the Hurewicz image. By \Cref{lem:split_construction_sphere}:

\begin{cor}\label{cor:tmf2_htpy_BU} For each $t \geq 0$,
$i\geq 12t+3$, and $j \geq 12t+8$, let $m(i)= 8(12t+3+i)-1$ and $n(j)=8(12t+8+j)-1$. There are nontrivial rank 
$8i$ bundles on $S^{2m(i)}$ and rank 
$8j$ bundles on $S^{2n(j)}$ 
that give nonzero $2$-torsion in $\pi_{2m(i)}BU(8i)$ and $\pi_{2n(j)}BU(8j)$, respectively.
\end{cor}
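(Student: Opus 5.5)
This is a direct application of \Cref{lem:split_construction_sphere} with $R=\tmf_{(2)}$ and $n=8$; the orientability hypothesis, that $8\gamma_1$ is $\tmf_{(2)}$-orientable, is Bauer's \cite[Lemma 2.1]{Bauer03}. For the class $\theta$ we take a lift to $\pi_*\sphere$ of either $\Delta^{8t}w\in\pi_{192t+45}\tmf_{(2)}$ or $\Delta^{8t}w\bar\kappa^4\in\pi_{192t+125}\tmf_{(2)}$, which exist by Behrens--Mahowald--Quigley \cite{BMQ}. In the two cases $d=192t+45$ and $d=192t+125$ respectively, and both are odd, as the lemma requires.

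Next comes the numerology. For $d=192t+45$ we have $\frac{d+1}{2}=96t+23$. The lemma's condition $8i\geq 96t+23$ is equivalent, for integer $i$, to $i\geq 12t+3$. The sphere in question has dimension $2ni+d+1=16i+192t+46$, which equals $2m(i)$ because $m(i)=8(12t+3+i)-1=8i+96t+23$.

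For $d=192t+125$ we have $\frac{d+1}{2}=96t+63$. The condition $8j\geq 96t+63$ is equivalent to $j\geq 12t+8$. The sphere dimension is $16j+192t+126=2n(j)$ with $n(j)=8j+96t+63$. So the lemma produces a nonzero stable map $V_\theta\colon \sphere^{2m(i)}\to\Sigma\cp{8i}{8i+96t+23}$, and likewise in the second case.

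To translate this into bundles, we use the identification recalled after \Cref{cor:iso}, namely $\pi_{2N}BU(r)\cong\pi_{2N}\Sigma\cp{r}{N}$ for metastable $N/2\leq r<N$. We need to check the metastability hypothesis $r\geq N/2$, i.e.\ $16i\geq 8i+96t+23$; this holds since $8i\geq 96t+23$ by the previous step. We also have $r<N$. The top cell of $\cp{8i}{m(i)}$ is in degree $2m(i)$, so $V_\theta$ is a class in $\pi_{2m(i)}\Sigma\cp{8i}{m(i)}$, which gives a nonzero element of $\pi_{2m(i)}BU(8i)$. That element is a nontrivial stably trivial rank $8i$ bundle on $S^{2m(i)}$; stable triviality is automatic on spheres in this range by Bott.

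It remains to see that the element is $2$-torsion. The class $\theta$ lies in positive stem, so it is torsion, and since it is detected in $\tmf_{(2)}$ we may take $\theta$ to be $2$-primary: after $2$-localization the lift can be chosen in the $2$-component. Hence $V_\theta=b\circ\theta$ has $2$-power order, and it yields nonzero $2$-torsion in the group.

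The only step requiring real care is the numerology together with the metastable-range check; everything else is supplied by the cited results and \Cref{lem:split_construction_sphere}.
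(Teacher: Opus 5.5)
Your proposal is correct and follows essentially the paper's argument: the paper also deduces the corollary directly from \Cref{lem:split_construction_sphere} with $R=\tmf_{(2)}$ and $n=8$, using Bauer's orientability of $8\gamma_1$ and the Behrens--Mahowald--Quigley Hurewicz classes $\Delta^{8t}w$ and $\Delta^{8t}w\bar\kappa^4$. Your extra checks spell out steps the paper leaves implicit: the numerology ($8i\geq 96t+23 \iff i\geq 12t+3$, $8j\geq 96t+63 \iff j\geq 12t+8$, $2ni+d+1=2m(i)$ and $2n(j)$), the metastable-range condition, and the reduction of $\theta$ to its $2$-primary component.
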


\subsubsection{$p$-torsion in homotopy of unitary groups via $\eop$}\label{sphere-eo3}
Now we move on to $p$-torsion for $p$ an odd prime. We consider the theories $\eop$ at the implicit prime $p$, and refer the reader to \Cref{EO} for more background, including discussion of the specific homotopy classes that we will use.  We also repeatedly use that $p\gamma_1$ is $\eop$-orientable (see \Cref{rmk:orient} and \cite[Corollary 1.6]{Chat}).

We get slightly stronger results for $p=3$ than for $p\geq 5$. For $p=3$, consider $\eo_2$: as discussed in \Cref{EO}, \cite[Theorem 6.5]{BelShim} together with a restriction argument shows that, for each $t \geq 0,$ 
there is an element $\theta_t \in \pi_{37+72t}\sphere$ with non-zero Hurewicz image in $\pi_{37+72t}\eo_2$.
So, by \Cref{lem:split_construction_sphere} and Chatham's $3$-primary $\eo_2$-orientability result \cite[Corollary 1.6]{Chat}:

\begin{cor}\label{cor:bel-shim2}
    For each $t\geq 0$ and each $l \geq 12t+7$, there is a nontrivial rank $3l$
    bundle on $S^{2(3l+19+36t)}$ that gives nonzero $3$-torsion in
    $\pi_{2\left(3l+19+36t\right)} BU(3l)$.
\end{cor}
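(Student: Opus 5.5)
We apply \Cref{lem:split_construction_sphere} with $R=\eo_2$ at $p=3$, $n=3$, $d=37+72t$ and $\theta=\theta_t\in\pi_{37+72t}\sphere$. The two hypotheses of the lemma are already available. First, $3\gamma_1$ is $\eo_2$-orientable by \cite[Corollary 1.6]{Chat}; equivalently, $\Theta(\eo_2)=3$ in the sense of \Cref{rmk:orient}. Second, $\theta_t$ has nonzero Hurewicz image in $\pi_{37+72t}\eo_2$. This follows from \cite[Theorem 6.5 and Lemma 6.6]{BelShim}: the $\tmf$-Hurewicz image $\beta_1[\alpha_1\Delta]\Delta^{3t}$ survives to $L_{K(2)}TMF\simeq \e_2^{hG_{24}}$, and the restriction to $\e_2^{hG}=\eo_2$ is split-injective on $3$-complete homotopy. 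Note also that $d$ is odd, as the lemma requires.

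Next we translate the indices. Set $r=3l$, so that $i=l$ in the lemma. The lemma requires $3l\geq \frac{d+1}{2}=19+36t$. The top cell of the target spectrum $\Sigma\cp{3l}{3l+19+36t}$ lies in degree $2(3l+19+36t)+1$, and the source sphere of $V_{\theta_t}$ is $\sphere^{6l+38+72t}=\sphere^{2(3l+19+36t)}$. By \Cref{cor:iso}, or directly by Toda's identification \Cref{groupA} with $n=3l+19+36t$, the composite $V_{\theta_t}$ is then an element of $\pi_{2n}\Sigma\cp{3l}{n}\cong\pi_{2n}BU(3l)$, that is, a stably trivial rank $3l$ bundle on $S^{2n}$.

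We must check that we are in the metastable range $\frac{n}{2}\leq r<n$, so that this identification applies. The inequality $r<n$ is automatic. The inequality $n\leq 2r$ reads $3l+19+36t\leq 6l$, i.e.\ $3l\geq 19+36t$, which is the same as the lemma's hypothesis. The stated bound $l\geq 12t+7$ gives $3l\geq 36t+21>19+36t$, so both conditions hold. The lemma then shows that $\eo_2\otimes V_{\theta_t}$ is nonzero, so $V_{\theta_t}$ is a nontrivial element of $\pi_{2(3l+19+36t)}BU(3l)$.

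It remains to show that this element is $3$-torsion. The group $\pi_{2n}\Sigma\cp{r}{n}$ is finite in this range, and $\theta_t$ is $3$-torsion, being in the $3$-primary part of $\pi_*\sphere$. Hence $V_{\theta_t}=b\circ\theta_t$ is killed by a power of $3$; since $\theta_t$ is itself of order $3$, $V_{\theta_t}$ is killed by $3$. This gives nonzero $3$-torsion.

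The only delicate step is the index bookkeeping: checking that the lemma's inequality matches the metastable condition, and that the source degree $2ni+d+1$ agrees with $2(3l+19+36t)$. Everything else is quoted from earlier results.
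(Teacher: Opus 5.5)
Your proposal is correct and is exactly the paper's argument: apply \Cref{lem:split_construction_sphere} with $R=\eo_2$, $n=3$, $\theta=\theta_t$, using \cite[Corollary 1.6]{Chat} for orientability of $3\gamma_1$ and the Belmont--Shimomura class transported to $\eo_2$ via the split restriction from $\e_2^{hG_{24}}$; your index check $3l\geq 19+36t$ correctly matches both the lemma's hypothesis and the metastable range. One small quibble: nothing cited shows $\theta_t$ has order exactly $3$, but you do not need this, since replacing $\theta_t$ by its $3$-primary component leaves the $\eo_2$-Hurewicz image unchanged and already gives a nonzero $3$-power-torsion class, which is all the paper asserts.
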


We now consider an arbitrary odd prime $p\geq 5$. As discussed in \Cref{EO}, for each $j$ such that $1\leq j\leq p-2$, there is an element
$\theta'_j\in \pi_{d_j}\bb{S}$ with nonzero Hurewicz image in $\pi_{d_j}\eop$, where
\[d_j=2p(p-1)^2+2p-3+j(2p^2-2p-2).\]
By \Cref{lem:split_construction_sphere}:

\begin{cor}\label{example:general_odd_prime2}
    Let $p\geq 5$ be an odd prime. Let $d_j$ be as in \Cref{def:dj}. For each $1\leq j \leq p-2$ and each $l\geq
    \frac{d_j+1}{2p}$, there is a nontrivial rank $lp$ bundle over
    $S^{2lp+1+d_j}$ that gives nonzero $p$-torsion in $\pi_{2lp+1+d_j}BU(lp)$.
\end{cor}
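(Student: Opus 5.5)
We obtain the statement by applying \Cref{lem:split_construction_sphere} with $R=\eop$, $n=p$, $d=d_j$ and $\theta=\theta'_j$, and then translating the resulting stable class into a homotopy class of $BU(lp)$ via \Cref{cor:iso}. The three hypotheses of the lemma have already been checked. By \Cref{rmk:orient} and \cite[Corollary 1.6]{Chat}, $p\gamma_1$ is $\eop$-orientable. By the discussion in \Cref{EO} following Nave, $\theta'_j\in\pi_{d_j}\sphere$ has nonzero Hurewicz image $[\alpha\Delta]\beta^j\in\pi_{d_j}\eop$ for $1\leq j\leq p-2$. Finally, $d_j=2p(p-1)^2+2p-3+j(2p^2-2p-2)$ is odd, since $2p-3$ is odd and every other term is even.

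With $i=l$, the lemma's condition $ni\geq\frac{d+1}{2}$ reads $lp\geq \frac{d_j+1}{2}$, which is weaker than the assumed $l\geq\frac{d_j+1}{2p}$ and is implied by it. The lemma then gives a map
\[ V_{\theta'_j}\colon \sphere^{2lp+d_j+1}\xrightarrow{\theta'_j}\sphere^{2lp+1}\xrightarrow{b}\Sigma\cp{lp}{lp+\frac{d_j+1}{2}} \]
which remains nonzero after tensoring with $\eop$, and is therefore nonzero.

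Next we identify this class with a rank $lp$ bundle on the sphere. Set $r=lp$ and $N=lp+\frac{d_j+1}{2}$, so that $2N=2lp+1+d_j$. The hypothesis $lp\geq\frac{d_j+1}{2}$ gives $N\leq 2r$, so the sphere $S^{2N}$ has cells in dimension at most $4r$. Since $S^{2N}$ is cohomologically even, \Cref{cor:iso} applies. It identifies stably trivial rank $r$ bundles on $S^{2N}$, which is all of $\pi_{2N}BU(r)$ because $\pi_{2N}BU=\pi_{2N-1}U=0$, with $\pi_{2N}\Sigma\cp{r}{\infty}=\pi_{2N}\Sigma\cp{r}{N}$; the last equality holds because the cells above dimension $2N+1$ do not contribute. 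Hence $V_{\theta'_j}$ is a nonzero element of $\pi_{2lp+1+d_j}BU(lp)$. We also record that $r<N$ because $d_j>0$, so the bundle is genuinely metastable and not in the stable range.

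It remains to check that the element is $p$-torsion. Since $\theta'_j$ is $p$-torsion, being detected by the $p$-torsion class $\alpha_1\beta_{p/p}\beta_1^{j-1}$ of Novikov filtration at least one in positive stem, the composite $b\circ\theta'_j$ is annihilated by the same power of $p$. Its image in $\pi_{d_j}\eop$ is killed by $p$, so the class is nonzero $p$-power torsion, as the statement requires.

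The only point that needs care is the bookkeeping of the inequality: checking that $N\leq 2r$ lies inside the range of \Cref{cor:iso}, and that the stated degree $2lp+1+d_j$ matches the lemma's source dimension $2ni+d+1$. Nothing computational beyond this is required.
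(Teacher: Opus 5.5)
Your proposal is correct and is essentially the paper's argument: the paper obtains this corollary by applying \Cref{lem:split_construction_sphere} with $R=\eop$, $n=p$, $i=l$, $d=d_j$ and $\theta=\theta'_j$, using the $\eop$-orientability of $p\gamma_1$ from \cite[Corollary 1.6]{Chat} and Nave's nonvanishing of the Hurewicz image $[\alpha\Delta]\beta^j$, with your translation through \Cref{cor:iso} spelling out what the lemma leaves implicit. One side remark is false but harmless: $\pi_{2N}BU\cong\pi_{2N-1}U\cong\Z$, not $0$, so the right reason every class in $\pi_{2N}BU(r)$ is stably trivial is that $\pi_{2N}BU(r)$ is finite for $N>r$ and hence maps to zero in $\Z$; in any case the existence claim only needs that \Cref{cor:iso} identifies $\pi_{2N}\Sigma\cp{r}{N}$ with the subgroup of stably trivial classes.
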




\section{Detection results for metastable, stably trivial vector bundles on complex projective spaces beyond corank \texorpdfstring{$2p^2-p-3$}{2p²−p−3}}\label{sec:detect_beyond}

 In \cite{HMY}, we detect non-trivial, stably trivial rank $r$ complex topological vector bundles on $\cp{}{n}$, for $n$ and $r$ metastable, by building certain classes in the higher real $K$-theory of a certain finite spectrum. For $p$ an odd prime, in \cite[Theorem 5.19]{HMY} we compute this group completely for $n-r<\frac{2p^2-p-2}{2}$. The remark following that theorem identifies an explicit direct summand throughout the larger range $n-r<2p^2-p-2$, while \cite[Theorem D]{HMY} shows that the corresponding Hurewicz map is surjective throughout this larger range.
These results are only useful for a range of coranks relative to the prime: at most, the regime is $n-r<2p^2-p-2,$ depending on the result being invoked. 

In this section, we show that the arguments in \Cref{sec:unitary-detection} above for spheres can be adapted to the case of projective spaces in order to detect nontrivial, high-corank vector bundles on complex projective spaces --- both at the prime $2$ and at odd primes. In particular, we are able to construct many bundles beyond corank $n-r> 2p^2-p-3$, which is the upper bound for odd-primary constructions in \cite{HMY}. The vector bundles detected in these sections are all pullbacks of vector bundles on spheres detected in \Cref{subsec:higher_p_unitary}.

 In \Cref{subsec:splitcells}, we directly imitate the $2$-primary arguments from \Cref{sec:unitary-detection} to detect nontrivial bundles that are pullbacks of vector bundles on spheres. The key observation here is that, for appropriately chosen $R$ and certain values of $n$ and $r$, the top cell in $\cp{r}{n}$ splits after tensoring with $R$.

\Cref{subsec:detect_p3} and \Cref{example:general_odd_prime} contain slightly more involved constructions to detect $p$-power torsion for $p$ an odd prime. These run parallel to \Cref{cor:bel-shim2} and \Cref{example:general_odd_prime2}, but require some extra work.

\subsection{2-primary detection of high-corank vector bundles on complex projective spaces}\label{subsec:splitcells}
The common tool that we will use in this section is the following lemma, which is an analogue of \Cref{lem:split_construction_sphere}:

\begin{lemma}\label{lem:split_construction} 
Let $R$ be a commutative ring spectrum and let $\gamma_1$ be the canonical
bundle on $\cp{}{\infty}$. For $n,k$ positive integers, suppose that $n\gamma_1$ is $R$-orientable and
$\theta \in \pi_{2nk-3}\sphere$ has nonzero Hurewicz image in $\pi_{2nk-3}R$.
Then for $i\geq k$ the vector bundle associated to the composite
\begin{equation}\label[diagram]{composite2}V_\theta\: \cp{ni}{n(k+i)-1} \xrightarrow{a} 
\sphere^{2n(k+i)-2} 
\xrightarrow{\theta}\sphere^{2ni+1} \xrightarrow{b} 
\Sigma \cp{ni}{n(k+i)-1}\end{equation} 
is nontrivial, where the maps $a$ and $b$ are the maps from the source to its
top cell and the bottom cell into the target, respectively.
\end{lemma}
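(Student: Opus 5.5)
Since the bundle corresponds, via \Cref{cor:iso} and \Cref{groupB}, to the stable homotopy class of $V_\theta$ in $\hmapsp{\cp{ni}{n(k+i)-1}}{\Sigma\cp{ni}{n(k+i)-1}}$, it suffices to show that $V_\theta$ is essential as a map of spectra. I will do this by tensoring with $R$ and showing that $R\otimes V_\theta$ is nonzero as a map of $R$-modules, in the same way as in \Cref{lem:split_construction_sphere}. The metastability condition is where $i\geq k$ enters. With $r=ni$ and top dimension $n(k+i)-1$, we need $\frac{n(k+i)-1}{2}\le ni$. Since $i\geq k$, this holds, so the bundle interpretation is valid.

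The key step is to show that after tensoring with $R$, both the bottom cell of the target and the top cell of the source split off. Because $n\gamma_1$ is $R$-orientable, the Thom isomorphism gives
\[R\otimes\cp{ni}{n(k+i)-1}\simeq R\otimes\Sigma^{2ni}\cp{0}{nk-1},\]
using that $\cp{ni}{ni+m}$ is the Thom spectrum of $ni\gamma_1$ (or of its dual; the choice only changes signs) over $\cp{}{m}$. The spectrum $\cp{0}{nk-1}=\Sigma^\infty_+\cp{}{nk-1}$ has the basepoint summand $\sphere$ split off. This splits the bottom cell of the target, giving a retraction $\rho\colon R\otimes\Sigma\cp{ni}{n(k+i)-1}\to \Sigma^{2ni+1}R$ with $\rho\circ(R\otimes b)\simeq \mathrm{id}$. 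For the top cell of the source, I plan to use duality. The Spanier--Whitehead dual of a Thom spectrum of a bundle $\xi$ over a closed manifold $M$ is the Thom spectrum of $-\xi-TM$, shifted. For $M=\cp{}{nk-1}$ we have $TM\oplus\mathbb{C}=nk\gamma_1$ (up to dualization conventions), so $D\cp{ni}{n(k+i)-1}$ is a shift of the Thom spectrum of $-n(i+k)\gamma_1$. This virtual bundle is again a multiple of $n\gamma_1$, hence $R$-orientable. Therefore $R\otimes D\cp{ni}{n(k+i)-1}$ is a shifted $R\otimes\Sigma^\infty_+\cp{}{nk-1}$, whose bottom cell splits. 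Dually, $R\otimes a$ admits a section $s\colon \Sigma^{2n(k+i)-2}R\to R\otimes\cp{ni}{n(k+i)-1}$ with $(R\otimes a)\circ s\simeq\mathrm{id}$. I expect this dual splitting to be the main point needing care, in particular getting the Atiyah duality and tangent bundle identification right so that the orientability hypothesis applies to a multiple of $n\gamma_1$. Alternatively, one can argue directly in $R$-homology: orientability makes $R_*\cp{ni}{n(k+i)-1}$ free over $R_*$ on cells, and then use Thom classes to build the splitting of the top cell.

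With these in hand, the composite $\rho\circ(R\otimes V_\theta)\circ s$ equals $R\otimes\theta\colon \Sigma^{2n(k+i)-2}R\to\Sigma^{2ni+1}R$. As an $R$-module map this is multiplication by the Hurewicz image of $\theta$ in $\pi_{2nk-3}R$, and this image is nonzero by hypothesis. Hence $R\otimes V_\theta\neq 0$, so $V_\theta$ is not null, and the associated stably trivial bundle is nontrivial.
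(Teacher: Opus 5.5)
Your proposal is correct and follows essentially the same route as the paper: you split off the bottom cell via the Thom isomorphism for $n\gamma_1$, split off the top cell by Spanier--Whitehead duality using $T\oplus\mathbb{C}\cong nk\gamma_1$, and then isolate $R\otimes\theta$, which is nonzero by the Hurewicz hypothesis. The differences are inessential: the paper dualizes $\Sigma^{2ni}\cp{0}{nk-1}$ after applying the Thom isomorphism rather than the full Thom spectrum, and it leaves implicit the retraction/section bookkeeping and the check that $i\geq k$ places you in the metastable range, both of which you spell out.
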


\begin{proof} First we show that both the top and bottom cells of $\cp{ni}{n(i+k)-1}$ 
split after tensoring with $R$. By the hypothesis that $n\gamma_1$ is $R$-orientable,

\[ R\otimes \cp{ni}{n(i+k)-1}\simeq R \otimes \Sigma^{2ni} \cp{0}{nk-1},\] so the bottom cell splits. 
To show that the top cell splits after smashing with $R$, we show that the bottom cell of 
$R\otimes D \cp{ni}{n(i+k)-1}$ splits. Let $T$ denote the tangent bundle on $\cp{}{nk-1}$. 
Since $T \oplus \underline{\mathbb C} \simeq \gamma_1^{\oplus nk}$, 

\begin{align*} R\otimes D \cp{0}{nk-1} 
& \simeq R\otimes (\cp{}{nk-1})^{-T}\\
&\simeq  R\otimes \Sigma^2 (\cp{}{nk-1})^{-(nk)\gamma_1}\\
&\simeq R\otimes \Sigma^{2-2nk}\cp{0}{nk-1}.\end{align*}
Hence the maps $a$ and $b$ in \Cref{composite2} are split after tensoring with 
$R$. The fact that $\theta$ has a nonzero Hurewicz image implies that the composite 
$R \otimes V_\theta$ is not null.
\end{proof}

We will apply \Cref{lem:split_construction} with both $R=KO$ and $R=\tmf_{(2)}$; we refer the reader to \Cref{KO} and \Cref{tmf} for background discussion, and in particular for a description of relevant classes in the homotopy groups and Hurewicz images of these spectra.

\subsubsection{Vector bundles on projective spaces via $KO$}\label{proj-KO} Again consider the $2$-torsion elements $\alpha_{4t+1} \in \pi_{8t+1}\sphere$. Since $\alpha_{4t+1}$ is detected in $KO$ \cite{Adams} and $2 \gamma_1$ is 
$KO$-orientable \cite[Lemma 2.18]{BhatChat}, we apply \Cref{lem:split_construction} and obtain the following result.
\begin{cor}\label{cor:KO_ah} For each $t\geq 0$ and $i \geq 2t+1$, there is a nontrivial rank $2i$ bundle on $\cp{}{2i+4t+1}$
 that gives nonzero $2$-torsion in 
\[\Vect_{2i}^0(\cp{}{2i+4t+1})\cong \hmapsp{\cp{2i}{2i+4t+1}}{\Sigma\cp{2i}{2i+4t+1} }.\]
\end{cor}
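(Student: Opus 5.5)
The plan is to apply \Cref{lem:split_construction} with $R=KO$, $n=2$, and $\theta=\alpha_{4t+1}\in\pi_{8t+1}\sphere$, and then translate the resulting map of spectra into a vector bundle using \Cref{cor:iso}.

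First, match the degrees. The lemma needs $\theta\in\pi_{2nk-3}\sphere$. With $n=2$ this degree is $4k-3$, and setting $4k-3=8t+1$ gives $k=2t+1$. The hypotheses of the lemma then hold as follows:
\begin{itemize}
\item $2\gamma_1$ is $KO$-orientable by \cite[Lemma 2.18]{BhatChat};
\item $\alpha_{4t+1}$ has nonzero $KO$-Hurewicz image by \cite{Adams} (see \Cref{KO});
\item the range condition $i\geq k$ becomes $i\geq 2t+1$, which is exactly the stated range.
\end{itemize}

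The lemma therefore produces a map of spectra
\[V_\theta\colon \cp{2i}{2(2t+1+i)-1}\to \Sigma\cp{2i}{2(2t+1+i)-1}\]
that is nonzero after tensoring with $KO$, and hence is nonzero itself. Note that $2(2t+1+i)-1=2i+4t+1$, so these are exactly the spectra in the statement.

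Next, set $N=2i+4t+1$ and $r=2i$, and check that we are in the metastable range $N/2\leq r<N$ required for \eqref{groupB}.
\begin{itemize}
\item $r<N$ is clear.
\item $N\leq 2r$ amounts to $4t+1\leq 2i$, which follows from $i\geq 2t+1$.
\end{itemize}
So \eqref{groupB}, that is \Cref{cor:iso} applied to $X=\cp{}{N}$, identifies $\Vect^0_{2i}(\cp{}{N})$ with $\hmapsp{\cp{2i}{N}}{\Sigma\cp{2i}{N}}$. In this group $V_\theta$ is a nonzero element. It corresponds to a nontrivial, stably trivial rank $2i$ bundle on $\cp{}{2i+4t+1}$.

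Finally, show that the element is $2$-torsion. It factors through $\alpha_{4t+1}$, which has order $2$, so $2V_\theta=0$.

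The only delicate point is bookkeeping. We must make sure the source is $\cp{2i}{N}$, not $\cp{}{N}$, by using the factorization $\cp{}{N}\to\cp{2i}{N}\to S^{2N}$ implicit in \eqref{groupB}. We must also check the cell dimensions $2N\leq 4r$, which is the same inequality as above.
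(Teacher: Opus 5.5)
Your proposal is correct and follows the paper's argument exactly: apply \Cref{lem:split_construction} with $R=KO$, $n=2$, $k=2t+1$ and $\theta=\alpha_{4t+1}$, using $KO$-orientability of $2\gamma_1$ and Adams' detection of $\alpha_{4t+1}$ in $KO$. The paper leaves two points implicit that you make explicit, and both are correct: the condition $i\geq 2t+1$ is exactly the metastable bound $N\leq 2r$ needed for the identification with stable maps, and $2V_\theta=0$ because $V_\theta$ factors through $\alpha_{4t+1}$.
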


\subsubsection{Vector bundles on projective spaces via $\tmf_{(2)}$}\label{proj-tmf}

By Bauer \cite[Lemma 2.1]{Bauer03}, the bundle $8\gamma_1$ is 
$\tmf_{(2)}$-orientable. By Behrens--Mahowald--Quigley \cite{BMQ}, there are nonzero classes $\Delta^{8t}\cdot w \in \pi_{192t+45}\tmf_{(2)}$ and $\Delta^{8t}\cdot w \bar{\kappa}^4 \in \pi_{192t+125}\tmf_{(2)}$
that are in the Hurewicz image. Appealing to \Cref{lem:split_construction}:
\begin{cor}\label{cor:tmf_ah} For each $t \geq 0$, $i\geq 12t+3$, and $j \geq 12t+8$, 
there are nontrivial rank $8i$ bundles on $\cp{}{8(12t+3+i)-1}$ and rank $8j$ bundles on $\cp{}{8(12t+8+j)-1}$. These bundles give nonzero $2$-torsion classes in 
\[\Vect_{8i}^{0}(\cp{}{8(12t+3+i)-1})
\simeq \hmapsp{ \cp{8i}{8(12t+3+i)-1}}{\Sigma\cp{8i}{8(12t+3+i)-1} }\] and in
\[\Vect_{8j}^{0}(\cp{}{8(12t+8+j)-1})\simeq \hmapsp{ \cp{8j}{8(12t+8+j)-1}}{\Sigma\cp{8j}{8(12t+8+j)-1} },\] respectively.
\end{cor}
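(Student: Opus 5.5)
The plan is to apply \Cref{lem:split_construction} with $R=\tmf_{(2)}$ and $n=8$, exactly as \Cref{cor:KO_ah} follows from it with $R=KO$. The hypotheses to verify are the $R$-orientability of $8\gamma_1$ and the Hurewicz image of a class in degree $2nk-3=16k-3$.

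Orientability of $8\gamma_1$ for $\tmf_{(2)}$ is Bauer's lemma \cite[Lemma 2.1]{Bauer03}. For the Hurewicz classes, I would take $\theta=\Delta^{8t}w\in\pi_{192t+45}\tmf_{(2)}$ and $\theta=\Delta^{8t}w\bar\kappa^4\in\pi_{192t+125}\tmf_{(2)}$, which lie in the Hurewicz image by \cite{BMQ}. The degree bookkeeping is as follows.
\begin{itemize}
\item For the first family, $16k-3=192t+45$ gives $k=12t+3$.
\item For the second family, $16k-3=192t+125$ gives $k=12t+8$.
\end{itemize}
The lemma requires $i\geq k$, which is exactly the stated conditions $i\geq 12t+3$ and $j\geq 12t+8$.

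The lemma then produces a composite
\[V_\theta\:\cp{8i}{8(k+i)-1}\to\Sigma\cp{8i}{8(k+i)-1}\]
that is essential after tensoring with $\tmf_{(2)}$, and hence is essential as a map of spectra. With $r=8i$ and top dimension $N=8(k+i)-1$, this is the projective space $\cp{}{8(12t+3+i)-1}$ in the first family, and similarly with $j$ in the second.

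To interpret $V_\theta$ as a bundle, I check the metastable condition $N/2\leq r<N$ needed for the identification \Cref{groupB}. Since $i\geq k$, we have $8(k+i)-1\leq 16i-1<16i=2r$, so $N/2\leq r$; and $r<N$ holds because $k\geq 1$. Hence \Cref{cor:iso} and \Cref{groupB} identify $V_\theta$ with a nontrivial element of $\Vect_{8i}^0(\cp{}{N})$.

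Finally, the class is $2$-torsion: $\theta$ is a torsion element of $\pi_*\sphere$ (of positive degree) and is $2$-local, so the resulting class is $2$-power torsion. Its nonzero image in $\tmf_{(2)}$-theory certifies that it is nonzero $2$-primary torsion.

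The main obstacle is only the degree arithmetic and confirming the metastable inequality; all the substantive input is contained in \Cref{lem:split_construction} together with Bauer's orientability result and the Behrens--Mahowald--Quigley Hurewicz image computation.
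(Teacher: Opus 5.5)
Your proposal is correct and matches the paper's argument: the corollary follows from \Cref{lem:split_construction} with $R=\tmf_{(2)}$ and $n=8$, using Bauer's orientability of $8\gamma_1$ and the Behrens--Mahowald--Quigley classes, with $k=12t+3$ and $k=12t+8$ as you computed. Your checks of the metastable inequality and of $2$-primary torsion just make explicit what the paper leaves implicit; for the torsion claim, strictly speaking you should replace the lift $\theta\in\pi_*\sphere$ by its $2$-primary component, which does not change its $\tmf_{(2)}$-Hurewicz image.
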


\subsection{ Odd-primary detection of vector bundles on complex projective spaces  }\label{subsec:detect_p3}
In \Cref{subsec:splitcells}, we used \Cref{lem:split_construction} 
 to produce nontrivial vector bundles from elements in the Hurewicz 
image of $R=KO$ or $R=\tmf_{(2)}$. 
The key observation was that $R$-orientability 
for multiples of the universal bundle on $\cp{}{\infty}$ leads to examples of 
stunted projective spaces $\cp{r}{n}$ where the top and bottom cells split after 
tensoring with $R$. This allowed us to show that a composite 
$\cp{r}{n} \to \sphere^{2n} \to \sphere^{2r+1} \to \Sigma \cp{r}{n}$ 
is nontrivial after tensoring with $R$ by an easy splitting argument. 

Requiring the top and bottom cells of $R\otimes \cp{r}{n}$ to split rigidly determines the 
degree of an element in $\pi_*\sphere$ that can be used in \Cref{lem:split_construction}.
 Considering more general $R$-split 
summands that are still simpler than $\cp{r}{n}$ itself, 
we extend our methods. However, the proof requires slightly more work.

\subsubsection{Vector bundles on projective spaces via $\eo_2$}\label{proj_eo3}

Recall from \Cref{EO} and \cite[Theorem 6.5]{BelShim} that, for each $t \geq 0$,
there is an element $\theta_t \in \pi_{37+72t}\sphere$ whose image in $\pi_{37+72t}\eo_2$ is the nonzero class
$\beta_1 [\alpha_1\Delta]\Delta^{3t}$. By \cite[Corollary 1.6]{Chat}, the bundle $3 \gamma_1$ is $\eo_2$-orientable. 

\begin{prop}\label{example:bel-shim} 
For each $t\geq 0$ and $l$ such that $3l \geq 19+36t$, there is nonzero $3$-torsion in the set of rank $3l$ bundles on $\cp{}{3l+19+36t}$, associated to the element $\theta_t$.\end{prop}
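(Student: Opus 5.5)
The plan is to run the argument of \Cref{lem:split_construction} with $R=\eo_2$ at $p=3$, using $3\gamma_1$-orientability \cite[Corollary 1.6]{Chat} and $\theta_t\in\pi_{37+72t}\sphere$. Set $r=3l$, $c=19+36t$ and $n=r+c$. The degrees match: $2n-(2r+1)=2c-1=37+72t$. So the candidate bundle corresponds, via \Cref{groupB}, to the composite
\[ V_{\theta_t}\colon \cp{r}{n}\xrightarrow{a}\sphere^{2n}\xrightarrow{\theta_t}\sphere^{2r+1}\xrightarrow{b}\Sigma\cp{r}{n}. \]
The hypothesis $3l\ge 19+36t$ puts us in the metastable range $r\ge c$, i.e.\ $\frac n2\le r$. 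The composite is $3$-torsion because $\theta_t$ is. It therefore suffices to show that $\eo_2\otimes V_{\theta_t}$ is not null.

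The bottom cell splits off exactly as in \Cref{lem:split_construction}, since $3\mid r$ gives $\eo_2\otimes\cp{r}{n}\simeq\eo_2\otimes\Sigma^{2r}\cp{0}{c}$. The difficulty is the top cell. Here $c+1=20+36t\not\equiv 0\pmod 3$, so the duality computation in \Cref{lem:split_construction} no longer splits it off. Redoing that computation with $(c+1)\gamma_1=(c+3)\gamma_1-2\gamma_1$ and $3\mid c+3$ gives
\[ \eo_2\otimes D\cp{0}{c}\simeq \eo_2\otimes\Sigma^{2-2(c+3)}\,\cp{2}{c+2}. \]
Under this equivalence the top cell of $\cp{0}{c}$ becomes the bottom cell of $\cp{2}{c+2}$. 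On the bottom few cells of $\cp{2}{c+2}$ the $P(1)_*$-comodule structure is a nontrivial $W_l$-type piece, and over $\eo_2$ the cell in degree $2k$ attaches to the cell in degree $2k-4$ by $\alpha_1$. Dually, $\eo_2\otimes\cp{r}{n}$ has an $\eo_2$-module summand $A$ containing the top cell. I expect $A$ to be a small complex built from cells in degrees $2n,2n-4,\dots$ with $\alpha_1$-attaching maps. Similarly, the bottom-cell summand of $\eo_2\otimes\Sigma\cp{r}{n}$ is a summand $B$ beginning $2r+1, 2r+5,\dots$, and here the bottom cell genuinely splits. After tensoring, $V_{\theta_t}$ becomes the composite $A\to\Sigma^{2n}\eo_2\xrightarrow{\theta_t}\Sigma^{2r+1}\eo_2\to B$, and the last map is split injective. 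So we only need $A\to\Sigma^{2n}\eo_2\xrightarrow{\theta_t}\Sigma^{2r+1}\eo_2$ to be nonzero.

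Using the cofiber sequence $A^{(2n-4)}\to A\to\Sigma^{2n}\eo_2$, this composite is null exactly when $\theta_t$, viewed as a map $\Sigma^{2n}\eo_2\to\Sigma^{2r+1}\eo_2$, factors through the connecting map $\Sigma^{2n}\eo_2\to\Sigma A^{(2n-4)}$. Since the top attaching map is $\alpha_1$, the lowest-order consequence of such a factorization is that the Hurewicz image $\beta_1[\alpha_1\Delta]\Delta^{3t}$ would be $\alpha_1$-divisible. If $A$ has further cells, the Toda brackets $\langle\alpha_1,\alpha_1,\dots\rangle$ also contribute indeterminacy; these are multiples of $\beta_1$ in lower stems.

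So the main obstacle is a computation in $\pi_*\eo_2$, read off from the homotopy fixed point spectral sequence (Nave's computation, cf.\ \Cref{divisibility}). One must show that $\beta_1[\alpha_1\Delta]\Delta^{3t}$ lies neither in $\alpha_1\cdot\pi_{34+72t}\eo_2$ nor in the indeterminacy coming from the longer attaching maps of $A$. I would first pin down the exact size of $A$ from the $P(1)_*$-comodule structure of $H_*\cp{2}{c+2}$. I would then check the relevant stem on the $E_\infty$-page, using $\Delta^3$-periodicity to reduce to $t=0$, that is, stems $34$ and $37$. There I expect $E_\infty$ to show that $\alpha\Delta\beta$ has no $\alpha$-divisor and is not a $\beta$-multiple of anything in the appropriate lower stem.
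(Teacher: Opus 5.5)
Your plan follows the paper's proof closely. You use the same composite $V_{\theta_t}$, base change to $\eo_2$, split off the bottom cell using $3\mid r$, isolate an $\eo_2$-summand $A$ carrying the top cell, and reduce to showing that $\beta_1[\alpha_1\Delta]\Delta^{3t}$ is not $\alpha_1$-divisible. The step that identifies $A$, however, rests on a false congruence.

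With $c=19+36t\equiv 1\pmod 3$ you get $c+3=22+36t\equiv 1\pmod 3$. So $3\nmid c+3$, and the Thom isomorphism for $(c+3)\gamma_1$ that you invoke is not available. This is not cosmetic. In $\cp{2}{c+2}$ one has $P^1x^2=2x^4$ and $P^1x^4=x^6$, so its bottom $P(1)$-piece is a $W_3$. Reading off $A$ from $H_*\cp{2}{c+2}$, as you plan, would therefore produce a three-cell $X_3$-type summand. You would then have to rule out extra indeterminacy from brackets $\langle -,\alpha_1,\alpha_1\rangle$, which your proposal only asserts away. The correct rewriting is $(c+1)\gamma_1=(c+2)\gamma_1-\gamma_1$ with $3\mid c+2=21+36t$, giving
\[
\eo_2\otimes D\cp{0}{c}\simeq \eo_2\otimes \Sigma^{-2c-2}\cp{1}{c+1}.
\]
In $\cp{1}{c+1}$ we have $P^1x=x^3$ and $P^1x^3=0$, so the bottom piece is $C(\alpha_1)$. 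Hence $A\simeq \eo_2\otimes\Sigma^{2n-4}C(\alpha_1)$, and the nullity criterion is exactly $\alpha_1$-divisibility, with no bracket terms.

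The paper reaches this summand without duality. Since $n\equiv 1\pmod 3$, we have $P^1x^{n-2}=(n-2)x^n\neq 0$ while $P^1x^{n-4}=0$. By \cite[Theorem 5.6]{Chat} and \cite[Proposition 6.4]{HMY}, the top summand of $\eo_2\otimes\cp{r}{n}$ is therefore $\eo_2\otimes\Sigma^{2n-4}C(\alpha_1)$. Moreover, the projection onto it is realized by an honest map of $3$-complete spectra $\cp{r}{n}\to\cp{n-2}{n}\to\Sigma^{2n-4}C(\alpha_1)$, through which the top-cell pinch factors. This is what justifies the cofiber-sequence step you apply to $A$.

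Finally, do not leave the non-$\alpha_1$-divisibility as an expectation. The paper takes it from Belmont--Shimomura and Nave (see \Cref{divisibility}). Your reduction to $t=0$ via multiplication by the unit $\Delta^{3}$ is fine.
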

\begin{proof}
All spectra that follow are $3$-completed, so in particular stunted projective spectra admit $3$-complete splittings which we will utilize extensively. Let $n=3l+19+36t$ and $r=3l$.
 To prove the proposition, we first define a map of spectra $V_{\theta_t}\:\cp{r}{n} \to \Sigma \cp{r}{n}$ that we will detect with $\eo_2$. Consider the following diagram:
\begin{equation}
\begin{tikzcd}
\cp{r}{n}  \ar[rr,"V_{\theta_t}"]\ar[d,"a"]  && \Sigma \cp{r}{n}\\
\Sigma^{2n-4}C(\alpha_1) \ar[rr,"\tilde\theta_t"]\ar[dr,"b"]& & \sphere^{2r+1} \ar[u,"c"]\\
 & \sphere^{2n} \ar[ur,"\theta_t"]
\end{tikzcd}
\end{equation}
The maps in the above diagram are as follows:
\begin{itemize}
\item $a\: \cp{r}{n} \to \Sigma^{2n-4}C(\alpha_1)$ is the composite of the map $p$ in the cofiber sequence 
\[\cp{r}{n-3}\to \cp{r}{n} \xrightarrow{p} \cp{n-2}{n} \simeq \Sigma^{2n-4}C(\alpha_1)\oplus \sphere^{2n-2}\] 
with the projection $\cp{n-2}{n} \to \Sigma^{2n-4} C(\alpha_1)$; 
\item $b$ fits into the cofiber sequence $\sphere^{2n-4} \to \Sigma^{2n-4}C(\alpha_1) \xrightarrow{b} \sphere^{2n}$;
\item $c$ is the inclusion of the bottom cell in $\Sigma \cp{r}{n}$; 
\item $\tilde \theta_t = \theta_t \circ b$;  and
\item $V_{\theta_t}$ makes the diagram commute.
\end{itemize}
First, we show that the maps $\eo_2 \otimes a$ and $\eo_2\otimes c$ split in $\eo_2$-modules. 
Note that \[\eo_2 \otimes \cp{3l}{n} \simeq \eo_2 \otimes \Sigma^{6l}\cp{0}{n-3l}\] by \cite[Corollary 1.6]{Chat}. It is immediate that 
$\eo_2 \otimes c$ splits. Using \cite[Theorem 5.6]{Chat}, $\eo_2\otimes \cp{0}{n-3l}$ splits as a sum of spectra 
$\eo_2\otimes\Sigma^{2s}X_i$ for $1 \leq i \leq 3$ (see \Cref{subsec:conventions} for a description of the spectra $X_i$).

Since $n \equiv 1\pmod 3$, the top summand is 
$\eo_2\otimes \Sigma^{2n-4}C(\alpha_1)$ \cite[Proposition 6.4]{HMY}.
To conclude that $V_{\theta_t}$ is nontrivial it suffices to show that 
\[\eo_2\otimes \tilde \theta_t  \: \eo_2\otimes \Sigma^{2n-4}C(\alpha_1)\to \Sigma^{2r+1}\eo_2\]
is nonzero.  By the free--forgetful adjunction, it suffices to show that the composite
\[\Sigma^{2n-4}C(\alpha_1)\xrightarrow{\widetilde\theta_t} \sphere^{2r+1}\longrightarrow\Sigma^{2r+1}\eo_2
\]
is nonzero. By the cofiber sequence defining $C(\alpha_1)$, this composite
is zero if and only if the $\eo_2$-Hurewicz image of $\theta_t$ is divisible
by $\alpha_1$, which it is not: the fact that the Hurewicz image in $\e_2^{hG_{24}}$ is not $\alpha_1$-divisible follows from \cite[Theorem 6.1, Figure 5, and the proof of Lemma 6.6]{BelShim}. However, we need a bit more. The conclusion for the image in $\pi_*\eo_2$ follows from \Cref{divisibility}, or directly from the homotopy fixed-point spectral-sequence calculation
of \cite[Section~2.2, especially Theorem~2.1 and pp.~498--499]{Nave}. \end{proof}

\subsubsection{Vector bundles on projective spaces via $\eop$}\label{proj_eop}
\label{subsec:detect_higher_p}
We again refer to \Cref{EO} for background on higher real $K$-theories and discussion of homotopy and Hurewicz images.

Recall from \Cref{example:general_odd_prime2} that there is a
finite family in the Hurewicz image: for
each $j$, $1\leq j\leq p-2$, there is an element $\theta'_j\in \pi_{d_j}\bb{S}$
whose image in $\pi_{d_j}\eop$ is the nonzero class $[\alpha
\Delta]\beta^j$. Here the degree $d_j$ equals $2p(p-1)^2+2p-3+j(2p^2-2p-2)$. 

We might hope to use the odd-primary non-trivial vector bundles on spheres from \Cref{example:general_odd_prime2} to construct families of nontrivial, stably trivial $p$-power torsion vector bundles on appropriate-dimensional complex projective spaces for $1\leq j \leq p-2$. Unfortunately, the numerology in our argument only works out nicely when $j=1$. (Note, however, that we do not actually prove the vector bundles on projective spaces obtained from $\theta'_j$ for $2\leq j \leq p-2$ are trivial!)

\begin{prop}\label{example:general_odd_prime}
    Let $p\geq 5$ be an odd prime. 
Let $d=2p(p-1)^2+2p-3+(2p^2-2p-2)$.
 For each $l$ so that $lp\geq (d+1)/2$,
    there is nonzero $p$-torsion in the set of rank $lp$ bundles over
    $\cp{}{lp+(d+1)/2}$, associated to the element $\theta_1'\in
    \pi_{d}\bb{S}$.
\end{prop}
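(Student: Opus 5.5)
The plan is to run the argument of \Cref{example:bel-shim} again, replacing $\eo_2$ by $\eo=\eop$ and $\theta_t$ by $\theta_1'$. Work $p$-completely. Set $r=lp$ and $n=lp+\frac{d+1}{2}$, so that $2n-(2r+1)=d$. The hypothesis $lp\geq (d+1)/2$ puts $(n,r)$ in the metastable range, so by \Cref{cor:iso} and \Cref{groupB} it suffices to produce a map of spectra $V_{\theta_1'}\colon \cp{r}{n}\to \Sigma\cp{r}{n}$ and show that $\eo\otimes V_{\theta_1'}$ is nonzero. Nonzero $p$-torsion then follows because everything is $p$-complete and the group is finite.

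The map is built exactly as before, with $C(\alpha_1)$ now a two-cell complex whose cells are $2p-2$ apart. Let $a\colon \cp{r}{n}\to \cp{n-p+1}{n}\to \Sigma^{2n-2p+2}C(\alpha_1)$ be the collapse onto the top $p$ cells, followed by projection onto the Adams summand containing the top cell. That summand is the cells in degrees $2n-2p+2$ and $2n$, attached by $\alpha_1$ since $P^1$ is nonzero on them in $\cp{}{\infty}$ for $n\not\equiv 0,1\pmod p$. Let $b\colon \Sigma^{2n-2p+2}C(\alpha_1)\to \sphere^{2n}$ be the pinch map, and let $c\colon\sphere^{2r+1}\to \Sigma\cp{r}{n}$ be the bottom cell. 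Define $V_{\theta_1'}=c\circ\theta_1'\circ b\circ a$.

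Next I show that $\eo\otimes a$ and $\eo\otimes c$ split as $\eo$-modules. Since $p\gamma_1$ is $\eo$-orientable \cite[Corollary 1.6]{Chat} and $p\mid r$, we have $\eo\otimes\cp{r}{n}\simeq \eo\otimes\Sigma^{2r}\cp{0}{n-r}$. This makes $\eo\otimes c$ split immediately. By \cite[Theorem 5.6]{Chat}, $\eo\otimes\cp{0}{n-r}$ decomposes as a sum of shifted $\eo\otimes X_i$. The numerology check is
\[
\tfrac{d+1}{2}=p(p-1)^2+p^2-2\equiv -2 \pmod p,
\]
so $n\equiv -2\pmod p$, matching the case $n\equiv 1\pmod 3$ used for $p=3$. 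I then need to invoke \cite[Proposition 6.4]{HMY}, or read off the $P(1)_*$-comodule structure of $H_*\cp{0}{n-r}$ near the top, to see that the top summand is exactly $\eo\otimes\Sigma^{2n-2p+2}X_2=\eo\otimes\Sigma^{2n-2p+2}C(\alpha_1)$. That summand is a length-$2$ indecomposable comodule, and the $\eo$-module splitting projects onto it compatibly with $a$.

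I expect this identification of the top summand to be the main obstacle. It is exactly where the value $j=1$ matters: for other $j$, the residue of $(d_j+1)/2$ mod $p$ changes, and the top summand is a longer $X_i$. The comodule $W_l$ containing the top class is determined by where the $P^1$-chain through degree $2(n-r)$ starts. I must check that with $n-r\equiv -2\pmod p$ this chain has length exactly $2$ and does not run into the bottom cells, which holds because $n-r\geq 2p-2$.

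Given these splittings, $\eo\otimes V_{\theta_1'}$ is nonzero as soon as $\eo\otimes(\theta_1'\circ b)$ is. By the free--forgetful adjunction, this reduces to showing that the composite
\[
\Sigma^{2n-2p+2}C(\alpha_1)\to\sphere^{2n}\xrightarrow{\theta_1'}\sphere^{2r+1}\to\Sigma^{2r+1}\eo
\]
is nonzero. From the cofiber sequence of $C(\alpha_1)$, it vanishes precisely when the Hurewicz image $[\alpha\Delta]\beta\in\pi_d\eo$ of $\theta_1'$ is $\alpha_1$-divisible. \Cref{divisibility} rules this out, which completes the argument.
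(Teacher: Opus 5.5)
Your proposal is correct and follows essentially the same route as the paper. It uses the same factorization $V_{\theta_1'}=c\circ\theta_1'\circ b\circ a$ through $\Sigma^{2n-2p+2}C(\alpha_1)$, the same splitting of $\eo\otimes a$ and $\eo\otimes c$ via \cite[Corollary 1.6, Theorem 5.6]{Chat} and \cite[Proposition 6.4]{HMY} using $n\equiv -2\pmod p$, and the same reduction to the non-$\alpha_1$-divisibility of $[\alpha\Delta]\beta$ from \Cref{divisibility}. One small slip, which does not affect the argument: since $P^1x^{n-p+1}=(n-p+1)x^n$, the condition for the two cells of that Adams summand to be attached by $\alpha_1$ is $n\not\equiv -1\pmod p$, not $n\not\equiv 0,1\pmod p$, and this condition still holds for $n\equiv -2$.
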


\begin{proof}
All spectra that follow are $p$-completed, so in particular stunted projective spectra admit $p$-complete splittings which we will use implicitly. 
    Let $r = lp$ and let $n = r+(d+1)/2$. Since $r\equiv 0 \pmod p$, by \cite{Chat} we have
    \[\eop\otimes\cp{r}{n}= \eop\otimes\cp{lp}{n} \simeq \eop\otimes\Sigma^{2lp}\cp{0}{n-lp}.\]
    Note that $\cp{0}{n-lp}$ splits as a sum of 
suspensions of the spectra $X_i$ after tensoring with $\eop$ \cite[Theorem 5.6]{Chat}. (See \Cref{subsec:conventions} for a description of the spectra $X_i$.)
 
   We now come to the point where we use the fact that we are working with $\theta'_1$, not a more general $\theta'_j$. Since $n\equiv (d+1)/2 \equiv p-2 \pmod p$, the top summand is 
$\eop\otimes \Sigma^{2n-2p+2}C(\alpha_1)$ \cite[Proposition 6.4]{HMY}. Projection onto this top $\eo$-split summand is in fact induced by a map of $p$-complete spectra tensored with $\eo$: to see this, note that projection onto an Adams summand followed by the quotient of a lower skeleton achieves a map \[a\:\cp{r}{n}\to \Sigma^{2n-2p+2} C(\alpha_1)\] whose base change to $\eo$-modules is the desired projection.

Consider the following diagram:
    \begin{equation}\label[diagram]{diagm:const_theta}
\begin{tikzcd}
\cp{r}{n}  \ar[rr,"V_{\theta_1'}"]\ar[d,"a"]  && \Sigma \cp{r}{n}\\
\Sigma^{2n-2p+2}C(\alpha_1) \ar[rr,"\tilde\theta'_1"]\ar[dr,"b"]& & \sphere^{2r+1} \ar[u,"c"]\\
 & \sphere^{2n} \ar[ur,"\theta'_1"]
\end{tikzcd}
\end{equation}
The maps in \Cref{diagm:const_theta} are as follows:
\begin{itemize}
\item $b$ fits into the cofiber sequence $\sphere^{2n-2p+2} \to \Sigma^{2n-2p+2}C(\alpha_1) \xrightarrow{b} \sphere^{2n}$;
\item $c$ is the inclusion of the bottom cell in $\Sigma \cp{r}{n}$; 
\item $\tilde \theta'_1 = \theta_1' \circ b$;  and
\item $V_{\theta'_1}$ makes the diagram commute.
\end{itemize}
The maps $a$ and $c$ become split after tensoring with $\eop$.

By the free--forgetful adjunction, it therefore suffices to show that the
composite
\[
\Sigma^{2n-2p+2}C(\alpha_1)
\xrightarrow{\widetilde\theta'_1}
\sphere^{2r+1}
\longrightarrow
\Sigma^{2r+1}\eop
\]
is nonzero. This composite
is zero if and only if the $\eop$-Hurewicz image of $\theta'_1$ is divisible
by $\alpha_1$. This is not the case by \Cref{divisibility}.
\end{proof}


\bibliographystyle{abbrv}
\bibliography{htpy-unitary}

\end{document}